\newcommand{\bdry}[1]{\partial #1}
\newcommand{\bgset}[1]{\big\{#1\big\}}
\newcommand{\closure}[1]{\overline{#1}}
\newcommand{\dint}{\ds{\int}}
\newcommand{\ds}[1]{\displaystyle #1}
\newcommand{\eps}{\varepsilon}
\newcommand{\half}{\frac{1}{2}}
\newcommand{\homo}{\simeq}
\newcommand{\hquad}{\hspace{0.08in}}
\newcommand{\isom}{\approx}
\newcommand{\norm}[2][]{\left\|#2\right\|_{#1}}
\renewcommand{\o}{\text{o}}
\newcommand{\PS}[1]{$(\text{PS})_{#1}$}
\newcommand{\pnorm}[2][]{\if #1'' \left|#2\right|_p \else \left|#2\right|_{#1} \fi}
\newcommand{\R}{\mathbb R}
\newcommand{\seq}[1]{\left(#1\right)}
\newcommand{\set}[1]{\left\{#1\right\}}
\DeclareMathOperator{\im}{im}
\newenvironment{enumroman}{\begin{enumerate}

}{\end{enumerate}}
\newtheorem{lemma}{Lemma}[section]
\newtheorem{proposition}[lemma]{Proposition}
\newtheorem{theorem}[lemma]{Theorem}
\numberwithin{equation}{section}
\title{\bf Pairs of nontrivial solutions to concave-linear-convex type elliptic problems\thanks{{\em MSC2010:} Primary 35J25, Secondary 35B33, 35J20, 58E05
\newline \indent\; {\em Key Words and Phrases:} concave-linear-convex elliptic problems, critical Sobolev exponents, pairs of nontrivial solutions, higher critical points, localization of critical groups}}
\author{\bf Pasquale Candito\\
DICEAM, Universit\`{a} degli Studi di Reggio Calabria\\
89100 Reggio Calabria, Italy\\
\em pasquale.candito@unirc.it\\
[\bigskipamount]
\bf Salvatore A. Marano\\
Dipartimento di Matematica e Informatica\\
Universit\`{a} degli Studi di Catania\\
95125 Catania, Italy\\
\em marano@dmi.unict.it\\
[\bigskipamount]
\bf Kanishka Perera\thanks{This work was completed while the third-named author was visiting Universit\`{a} degli Studi di Catania, and he is grateful for the kind hospitality of the host university.}\\
Department of Mathematical Sciences\\
Florida Institute of Technology\\
Melbourne, FL 32901, USA\\
\em kperera@fit.edu}
\date{}
\begin{document}

\maketitle

\begin{abstract}
We obtain a pair of nontrivial solutions for a class of concave-linear-convex type elliptic problems that are either critical or subcritical. The solutions we find are neither local minimizers nor of mountain pass type in general. They are higher critical points in the sense that they each have a higher critical group that is nontrivial. This fact is crucial for showing that our solutions are nontrivial. We also prove some intermediate results of independent interest on the localization and homotopy invariance of critical groups of functionals involving critical Sobolev exponents.
\end{abstract}

\section{Introduction}

Consider the concave-linear-convex type problem
\begin{equation} \label{1}
\left\{\begin{aligned}
- \Delta u & = \mu f(x,u) + \lambda u + |u|^{p-2}\, u && \text{in } \Omega\\[10pt]
u & = 0 && \text{on } \bdry{\Omega},
\end{aligned}\right.
\end{equation}
where $\Omega$ is a bounded domain in $\R^N$, $N \ge 2$, $p > 2$ if $N = 2$ and $2 < p \le 2^\ast = 2N/(N - 2)$ if $N \ge 3$, $\lambda \ge 0$ and $\mu > 0$ are parameters, and $f$ is a Carath\'{e}odory function on $\Omega \times \R$ satisfying
\begin{equation} \label{3}
f(x,t) = |t|^{\sigma - 2}\, t + \o(|t|^{\sigma - 1}) \quad \text{as } t \to 0
\end{equation}
uniformly a.e. in $\Omega$ for some $\sigma \in (1,2)$ and
\begin{equation} \label{2}
|f(x,t)| \le C(|t|^{r-1} + 1) \quad \forall (x,t) \in \Omega \times \R
\end{equation}
for some $C > 0$ and $2 < r < p$. Ambrosetti et al.\! studied the special case $\lambda = 0$ of this problem in the pioneering paper \cite{MR1276168}, and showed, among other things, that there are two positive solutions for all sufficiently small $\mu > 0$. The first solution is a local minimizer of the associated energy functional
\[
E_0(u) = \int_\Omega \left(\half\, |\nabla u|^2 - \mu F(x,u) - \frac{1}{p}\, |u|^p\right) dx, \quad u \in H^1_0(\Omega),
\]
where $F(x,t) = \int_0^t f(x,s)\, ds$, and the second solution is a mountain pass point.

Their result is easily extended to the case $0 < \lambda < \lambda_1$, where $\lambda_1 > 0$ is the first Dirichlet eigenvalue of $- \Delta$ on $\Omega$. When $\lambda \ge \lambda_1$, we cannot expect to find positive solutions in general, but we may ask if the problem still has a pair of nontrivial solutions for small $\mu > 0$. In the present paper we show that this is indeed the case if
\begin{equation} \label{37}
F(x,t) \ge 0 \quad \forall (x,t) \in \Omega \times \R
\end{equation}
in the following cases:
\begin{enumroman}
\item $N = 2$ and $p > 2$,
\item $N = 3$ and $2 < p < 6$,
\item $N \ge 4$ and $2 < p \le 2^\ast$.
\end{enumroman}
However, the solutions we find here are neither local minimizers nor of mountain pass type in general. They are higher critical points in the sense that they each have a higher critical group that is nontrivial. This fact is crucial to showing that our solutions are themselves nontrivial.

First we have the following theorem in the critical case $p = 2^\ast$.

\begin{theorem} \label{Theorem 1}
If $N \ge 4$, \eqref{3}--\eqref{37} hold, and $\lambda \ge \lambda_1$, then there exists $\mu^\ast > 0$ such that the problem
\begin{equation} \label{49}
\left\{\begin{aligned}
- \Delta u & = \mu f(x,u) + \lambda u + |u|^{2^\ast - 2}\, u && \text{in } \Omega\\[10pt]
u & = 0 && \text{on } \bdry{\Omega}
\end{aligned}\right.
\end{equation}
has two nontrivial solutions for all $\mu \in (0,\mu^\ast)$.
\end{theorem}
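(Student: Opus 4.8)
The plan is to treat the weak solutions of \eqref{49} as the critical points of the energy functional
\[
E(u) = \int_\Omega \left(\half\,|\nabla u|^2 - \mu F(x,u) - \frac{\lambda}{2}\,u^2 - \frac{1}{2^\ast}\,|u|^{2^\ast}\right) dx, \quad u \in H^1_0(\Omega),
\]
which lies in $C^1(H^1_0(\Omega))$, and to extract a pair of nontrivial critical points by Morse theory. Since $E(0)=0$ and $0$ is always a critical point, the whole scheme rests on comparing the critical groups of $E$ at the origin with its critical groups ``at infinity'': if these differ in some degree, a nontrivial critical point must exist, and the degree in which they differ will be exactly the ``higher'' degree advertised in the statement. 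The main obstacle throughout is that $p = 2^\ast$ is critical, so the embedding $H^1_0(\Omega) \hookrightarrow L^{2^\ast}(\Omega)$ is noncompact and $E$ fails the \PS{} condition globally; this is precisely what the localization and homotopy-invariance results proved earlier are designed to circumvent.

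First I would fix the compactness window. A standard concentration-compactness / Brezis--Nirenberg analysis shows that every \PS{c} sequence is bounded and converges strongly provided
\[
c < c^\ast := \frac{1}{N}\, S^{N/2},
\]
where $S$ is the best Sobolev constant, the point being that any loss of compactness swallows at least the energy $c^\ast$ of one Sobolev bubble while the concave term $\mu F$ and the linear term $\frac\lambda2 u^2$ are subcritical, hence compact, perturbations that do not lower this threshold. Shrinking $\mu$ if necessary guarantees \PS{c} for all $c<c^\ast$ and all $\mu \in (0,\mu^\ast)$. All the Morse theory must therefore be carried out \emph{below} $c^\ast$; deforming across $c^\ast$ is illegitimate, and this is where homotopy invariance of critical groups for critical functionals is invoked to replace $E$, on the relevant low-energy region, by a better-behaved (suitably truncated or subcritical) model whose critical groups can actually be computed.

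Next I would compute the local picture at the origin. Because $\sigma \in (1,2)$, hypothesis \eqref{3} makes the even, $\sigma$-homogeneous concave term dominate all higher-order terms near $0$: along every ray the functional first decreases from $0$, so the sublevel set $\set{E \le 0}$ fills a full punctured neighborhood in every direction for which the quadratic form is nonpositive, while along the high-frequency directions (where $|\nabla u|^2 - \lambda u^2 > 0$) it recovers positivity only after a radius that tends to $0$ with the frequency. Using \eqref{37} to control $F$ and the cohomological index to track the even leading part, I would show that this competition forces the critical groups $C_q(E,0)$ to vanish in the range of degrees relevant below. Since $\lambda \ge \lambda_1$ renders the quadratic form indefinite, $0$ is neither a local minimizer nor a mountain-pass point, which is exactly why the surviving solutions cannot be of those two types.

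Finally I would produce the nontrivial critical groups at infinity by a linking dictated by the spectral splitting at $\lambda$. Writing $\lambda \in [\lambda_k,\lambda_{k+1})$ and decomposing $H^1_0(\Omega)$ into the finite-dimensional part on which $|\nabla u|^2 - \lambda u^2 \le 0$ and its complement, a linking between a sphere in the positive part and the negative subspace (capped off using that $E \to -\infty$ on the negative directions thanks to the $-\frac{1}{2^\ast}|u|^{2^\ast}$ term) produces min-max values strictly below $c^\ast$ for small $\mu$, where \eqref{37} helps since $-\mu F \le 0$ lowers the energy; thus \PS{} applies. Pinning the linking dimension with the cohomological index shows $C_q(E,\infty) \ne 0$ in degrees $q$ tied to the dimension of the negative subspace; comparing with the vanishing $C_q(E,0)$ from the previous step, the Morse inequalities yield critical points carrying these nontrivial higher critical groups, hence nontrivial solutions. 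Obtaining a genuine \emph{pair} comes from locating two such nontrivial higher critical groups at two different critical levels --- a lower one from the concave competition and a higher, sub-$c^\ast$ one from the indefinite linking --- so that the corresponding critical points are necessarily distinct. The hard part, to reiterate, is keeping every min-max value and every deformation strictly under the compactness threshold $c^\ast$ so that the critical-group machinery, transplanted via homotopy invariance and localization, remains valid despite the critical exponent.
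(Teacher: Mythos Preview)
Your overall architecture is right in spirit---vanishing critical groups at $0$ from Theorem~\ref{Theorem 4}, a linking adapted to the spectral position $\lambda_l \le \lambda < \lambda_{l+1}$, and compactness only below the Sobolev threshold---but two genuine gaps remain.

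First, the assertion that ``a linking\dots produces min-max values strictly below $c^\ast$ for small $\mu$'' is the crux and does not follow from \eqref{37} alone. For a generic $z_0$ one has no control on $\sup_A E_\lambda$. The paper takes $z_0$ to be a truncated Talenti instanton $u_\eps$ (projected off the $\lambda_l$-eigenspace when $\lambda=\lambda_l$) and invokes the quantitative estimate of Capozzi--Fortunato--Palmieri to obtain $\sup_A E_\lambda < \frac{1}{N}S^{N/2}$; this is precisely where $N \ge 4$ is used, and your outline never supplies it. One also needs the sharpened compactness window $c < \frac{1}{N}S^{N/2} - \kappa\mu$ of Lemma~\ref{Lemma 4}, so that $\sup_A E_\lambda$ can be brought below the actual \PS{} threshold by shrinking $\mu$.

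Second, your appeal to $C_q(E,\infty)$ and the Morse inequalities is not available here: those tools require \PS{c} for all $c$ below some fixed level, whereas you only have it on the bounded interval below $c^\ast$, and $E_\lambda$ is unbounded below. The paper sidesteps this entirely by applying the abstract linking Theorem~\ref{Theorem 3}, which needs \PS{} only on a finite window $(a,b)$ with $\sup_A E_\lambda < b < \frac{1}{N}S^{N/2}-\kappa\mu$ and delivers \emph{both} critical points at once, with $C_l(E_\lambda,u_1)\ne 0$ and $C_{l+1}(E_\lambda,u_2)\ne 0$, from a single linking geometry rather than from two separate level constructions. No cohomological index is involved; nontriviality of $u_1,u_2$ is read off directly from $C_q(E_\lambda,0)=0$ versus the nonvanishing groups in degrees $l$ and $l+1$.
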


In the subcritical case we replace the linear term $\lambda u$ with a more general nonlinearity $g(x,u)$ and consider the problem
\begin{equation} \label{4}
\left\{\begin{aligned}
- \Delta u & = \mu f(x,u) + g(x,u) + |u|^{p-2}\, u && \text{in } \Omega\\[10pt]
u & = 0 && \text{on } \bdry{\Omega},
\end{aligned}\right.
\end{equation}
where $p > 2$ if $N = 2$ and $2 < p < 2^\ast$ if $N \ge 3$, $\mu > 0$ is a parameter, $f$ satisfies \eqref{3}--\eqref{37}, and $g$ is a Carath\'{e}odory function on $\Omega \times \R$ satisfying
\begin{equation} \label{40}
g(x,t) = \o(|t|^{\sigma - 1}) \quad \text{as } t \to 0
\end{equation}
uniformly a.e. in $\Omega$ and
\begin{equation} \label{5}
|g(x,t)| \le C(|t|^{r-1} + 1) \quad \forall (x,t) \in \Omega \times \R.
\end{equation}
Let $G(x,t) = \int_0^t g(x,s)\, ds$, and let $\lambda_1 < \lambda_2 \le \lambda_3 \le \cdots$ be the Dirichlet eigenvalues of $- \Delta$ on $\Omega$, repeated according to multiplicity. We assume that
\begin{equation} \label{10}
G(x,t) \ge \half\, \lambda_l\, t^2 \quad \forall (x,t) \in \Omega \times \R
\end{equation}
for some $l \ge 1$ and
\begin{equation} \label{9}
G(x,t) \le \half\, \lambda\, t^2 \quad \forall x \in \Omega,\, |t| \le \delta
\end{equation}
for some $\delta > 0$ and $\lambda < \lambda_{l+1}$. We have the following theorem.

\begin{theorem} \label{Theorem 2}
If \eqref{3}--\eqref{37} and \eqref{40}--\eqref{9} hold, then there exists $\mu^\ast > 0$ such that problem \eqref{4} has two nontrivial solutions for all $\mu \in (0,\mu^\ast)$.
\end{theorem}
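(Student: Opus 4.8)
The plan is to work variationally with the $C^1$ energy functional
\[
E_\mu(u) = \half \int_\Omega |\nabla u|^2\,dx - \mu \int_\Omega F(x,u)\,dx - \int_\Omega G(x,u)\,dx - \frac{1}{p} \int_\Omega |u|^p\,dx
\]
on $H = H^1_0(\Omega)$, whose critical points are precisely the weak solutions of \eqref{4}. Because the problem is subcritical, compactness is cheap: the superquadratic term $\frac1p\int_\Omega|u|^p$ ($p>2$) together with the growth bounds \eqref{2} and \eqref{5} gives an Ambrosetti--Rabinowitz-type inequality forcing every $(\text{PS})$ sequence to be bounded, and the compact embeddings $H\hookrightarrow L^p(\Omega),L^r(\Omega)$ then yield a convergent subsequence, so $E_\mu$ satisfies $(\text{PS})$ for every $\mu>0$. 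The whole argument is Morse-theoretic: I will produce two critical points carrying nontrivial critical groups, and I will show that the origin has \emph{trivial} critical groups, which is exactly what makes the solutions nontrivial.

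The pivotal computation is at the origin. For the unperturbed ($\mu=0$) ``linear--convex'' functional, the eigenvalue conditions \eqref{10} and \eqref{9} produce a local linking at $0$ relative to $H=H^-\oplus H^+$, where $H^-=\mathrm{span}\set{\varphi_1,\dots,\varphi_l}$ and $H^+=(H^-)^\perp$: by \eqref{10} one has $\half\|u\|^2\le\int_\Omega G(x,u)\,dx$ on $H^-$, so $E_0\le0$ there, while \eqref{9} and the gap $\lambda<\lambda_{l+1}$ make the quadratic part positive definite on $H^+$ near $0$. Hence $C_l(E_0,0)\ne0$. Switching on the concave term reverses this: by \eqref{3} with $\sigma\in(1,2)$ one has $E_\mu(tu)\sim-\tfrac{\mu}{\sigma}\,t^\sigma\int_\Omega|u|^\sigma\,dx$ as $t\to0^+$ along every ray, so $E_\mu$ strictly decreases near $0$ in every radial direction, and a standard deformation collapsing a small sublevel neighborhood of $0$ onto a punctured set yields $C_q(E_\mu,0)=0$ for all $q$ (assuming $0$ is isolated, else \eqref{4} already has infinitely many solutions).

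Next I would extract the two solutions. Fixing $\rho>0$ small and then $\mu^\ast$ small, the estimate $\int_\Omega F(x,u)\,dx\lesssim\|u\|^\sigma+\|u\|^r$ coming from \eqref{3} and \eqref{2}, combined with the gap $\lambda<\lambda_{l+1}$, gives $E_\mu\ge\alpha>0$ on the sphere $\set{u\in H^+:\|u\|=\rho}$; meanwhile $E_\mu\le0$ on $H^-$ (by \eqref{10} and $F\ge0$) and $E_\mu\to-\infty$ along $H^-$ and, through the superquadratic term, in every direction at infinity. This is a genuine linking of $\set{u\in H^+:\|u\|=\rho}$ with the boundary of the $(l+1)$-dimensional half-disk $\{v+te:v\in H^-,\ t\ge0,\ \|v+te\|\le R\}$ (with $e=\varphi_{l+1}$, $R$ large), so the associated min--max value $c_2\ge\alpha>0$ is a critical value carrying $C_{l+1}(E_\mu,u_2)\ne0$; as $l\ge1$ by \eqref{10}, here $l+1\ge2$, so $u_2$ is a genuine higher critical point---neither a local minimizer nor of mountain--pass type. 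For the second solution I argue by contradiction through the homotopy invariance of critical groups: if $0$ were the only critical point of $E_\mu$ in a small ball $B_\rho$ for all $\mu\in[0,\mu^\ast]$, invariance along $\mu\to0^+$ would force $C_\ast(E_\mu,0)=C_\ast(E_0,0)\ne0$, contradicting the previous paragraph; hence there is a nontrivial critical point $u_1\in B_\rho\setminus\set{0}$, again with a nontrivial critical group in positive degree. Comparing critical groups---by localization in $B_\rho$ the point $u_1$ must carry a nontrivial class in degree $l$, whereas $u_2$ has $C_{l+1}\ne0$---shows $u_1\ne u_2$, and both differ from $0$ since $C_\ast(E_\mu,0)=0$.

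The main obstacle is the critical-group bookkeeping across the \emph{singular} concave perturbation: the nonlinearity $|t|^{\sigma-2}t$ is only H\"older near $0$ and its potential behaves like $|t|^\sigma$ with $\sigma<2$, so neither the vanishing $C_\ast(E_\mu,0)=0$ nor the transfer $C_\ast(E_0,0)\to C_\ast(E_\mu,0)$ along $\mu\to0^+$ is routine; both rest squarely on the localization and homotopy-invariance lemmas developed earlier in the paper, and it is precisely these that must also certify the distinctness $u_1\ne u_2$ through the mismatch of degrees $l$ and $l+1$. A subsidiary point is that the positivity level $\alpha>0$ on the sphere in $H^+$ has to be secured uniformly, which fixes the order of quantifiers ($\rho$ first, then $\mu^\ast$). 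By contrast, in this subcritical regime the $(\text{PS})$ condition and all compactness are automatic, so---unlike the critical case of Theorem~\ref{Theorem 1}---no concentration--compactness analysis is needed.
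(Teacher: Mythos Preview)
Your linking construction for $u_2$ and your computation $C_\ast(E_\mu,0)=0$ are both in line with the paper. The gap is in the production of $u_1$.

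Your contradiction hypothesis reads: ``$0$ is the only critical point of $E_\mu$ in $B_\rho$ for \emph{all} $\mu\in[0,\mu^\ast]$.'' Under this hypothesis, Proposition~\ref{Proposition 2} indeed yields $C_l(E_{\mu},0)\isom C_l(E_0,0)$, and (granting that $0$ is isolated for $E_0$) the local linking gives $C_l(E_0,0)\ne0$, contradicting $C_\ast(E_{\mu},0)=0$. But the negation of your hypothesis is only: for \emph{some} $\mu'\in[0,\mu^\ast]$, the ball $B_\rho$ contains a nonzero critical point of $E_{\mu'}$. This does not produce a second solution of \eqref{4} for the \emph{given} $\mu$, which is what Theorem~\ref{Theorem 2} asserts for every $\mu\in(0,\mu^\ast)$. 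There is also a hidden assumption that $0$ is isolated for $E_0$; nothing in \eqref{40}--\eqref{9} guarantees this, and if it fails your homotopy never launches. Finally, the assertion that ``by localization in $B_\rho$ the point $u_1$ must carry a nontrivial class in degree $l$'' is not justified: even if $u_1$ existed, your argument gives no information on $C_\ast(E_\mu,u_1)$, so you cannot separate $u_1$ from $u_2$ by degree.

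The paper sidesteps all of this by applying Theorem~\ref{Theorem 3} once, for the fixed $\mu$. The single linking inequality $\sup_{\partial A}E=0<\inf_{\partial B}E$ forces \emph{two} nontrivial relative homology groups, $H_l(E^\beta,E^\alpha)\ne0$ and $H_{l+1}(E^\gamma,E^\beta)\ne0$, and Proposition~\ref{Proposition 1} then delivers $u_1,u_2$ with $C_l(E,u_1)\ne0$, $C_{l+1}(E,u_2)\ne0$ and $E(u_1)\le\sup_{\partial A}E=0<\inf_{\partial B}E\le E(u_2)$, so $u_1\ne u_2$ by energy. Theorem~\ref{Theorem 4} (your second paragraph) then shows both differ from $0$. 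No comparison with $E_0$ is needed; the second critical point comes from the \emph{lower} half of the same linking picture, not from a homotopy in $\mu$.
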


Although Theorems \ref{Theorem 1} and \ref{Theorem 2} apply to the model case $f(x,t) = |t|^{\sigma - 2}\, t$, $f$ is not assumed to be odd in $t$ in these theorems, so symmetry does not play a role here. We also note that \eqref{40}--\eqref{9} hold for $g(x,t) = \lambda t$ when $\lambda_l \le \lambda < \lambda_{l+1}$.

Weak solutions of problem \eqref{4} coincide with critical points of the $C^1$-functional
\[
E(u) = \int_\Omega \left(\half\, |\nabla u|^2 - \mu F(x,u) - G(x,u) - \frac{1}{p}\, |u|^p\right) dx, \quad u \in H^1_0(\Omega).
\]
Recall that critical groups of $E$ at an isolated critical point $u_0$ are defined by
\[
C_q(E,u_0) = H_q(E^c \cap U,E^c \cap U \setminus \set{u_0}), \quad q \ge 0,
\]
where $c = E(u_0)$, $E^c = \set{u \in H^1_0(\Omega) : E(u) \le c}$, $U$ is a neighborhood of $u_0$, and $H_\ast$ denotes singular homology. First we will show that $E$ has of critical points $u_1, u_2$ with
\[
C_l(E,u_1) \ne 0, \qquad C_{l+1}(E,u_2) \ne 0
\]
if $\mu > 0$ is sufficiently small. This will be based on the following abstract result adapted from Perera \cite{MR1473858}.

\begin{theorem} \label{Theorem 3}
Let $E$ be a $C^1$-functional on a Banach space $X$. Assume that there are a direct sum decomposition $X = Y \oplus Z,\, u = v + w$, with $l=\dim Y < \infty$, $z_0 \in X \setminus Y$, $0 < \rho < R$, and $a < b$ such that, setting
\[
A = \set{u = v + t z_0 : v \in Y,\, t \ge 0,\, \norm{u} \le R}, \qquad B = \set{w \in Z : \norm{w} \le \rho}
\]
and denoting by $\bdry{A}$ and $\bdry{B}$ the relative boundaries of $A$ and $B$, respectively, we have
\[
a < \inf_B\, E, \qquad \sup_{\bdry{A}}\, E < \inf_{\bdry{B}}\, E, \qquad \sup_A\, E < b.
\]
Assume further that $E$ satisfies the {\em \PS{c}} condition for all $c \in (a,b)$ and has only a finite number of critical points in $E^{-1}((a,b))$. Then $E$ has a pair of critical points $u_1, u_2$ with
\[
\inf_B\, E \le E(u_1) \le \sup_{\bdry{A}}\, E, \qquad \inf_{\bdry{B}}\, E \le E(u_2) \le \sup_A\, E
\]
and
\[
C_l(E,u_1) \ne 0, \qquad C_{l+1}(E,u_2) \ne 0.
\]
\end{theorem}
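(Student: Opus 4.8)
The plan is to read Theorem~\ref{Theorem 3} as a homological linking statement and to extract both critical points, together with their critical groups, from the homology of the sublevel sets $E^c = \set{u \in X : E(u) \le c}$. Since $z_0 \notin Y$, its component in $Z$ is nonzero, and by a linear homotopy that does not affect the geometric hypotheses I may take $z_0 \in Z$ with $\norm{z_0} = 1$, so that $A$ is an $(l+1)$-dimensional half-ball in $Y \oplus \R z_0$ with relative boundary $\bdry{A} \homo S^l$, while $\bdry{B} = \set{w \in Z : \norm{w} = \rho}$ is the $Z$-sphere that classically links $\bdry{A}$. Writing $a_0 = \inf_B E$, $\alpha = \sup_{\bdry{A}} E$, $\beta = \inf_{\bdry{B}} E$, $b_0 = \sup_A E$, and noting that $0 \in A \cap B$ lies on the flat face of $A$, the hypotheses give $a < a_0 \le E(0) \le \alpha < \beta$ and $b_0 < b$; every level used below will lie in $(a,b)$, where the \PS{c} condition holds and the critical set is finite.

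First I would obtain the saddle point $u_2$ and its level by the minimax
\[
c_2 = \inf_{h \in \Gamma} \max_{u \in A} E(h(u)), \qquad \Gamma = \set{h \in C(A,X) : h|_{\bdry{A}} = \mathrm{id}}.
\]
The topological linking of $\bdry{A}$ and $\bdry{B}$ --- every $h \in \Gamma$ satisfies $h(A) \cap \bdry{B} \ne \emptyset$ --- gives $\beta \le c_2 \le b_0$, and $c_2$ is a critical value by the first deformation lemma and \PS{c}. For the critical groups I would rely throughout on the Morse-theoretic principle, available because the critical set in $(a,b)$ is finite: whenever $r < s$ are regular values in $(a,b)$ and $H_q(E^s, E^r) \ne 0$, some critical point $u$ with $r < E(u) < s$ satisfies $C_q(E,u) \ne 0$, the filtration by sublevel sets yielding a spectral sequence with first page $\bigoplus_u C_\ast(E,u)$ converging to $H_\ast(E^s, E^r)$.

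It then remains to produce two nonvanishing relative homology groups. In degree $l+1$, the inclusion $(A, \bdry{A}) \hookrightarrow (E^{b_0}, E^\alpha)$ (legitimate since $\bdry{A} \subset E^\alpha$) carries the fundamental class $[A] \in H_{l+1}(A, \bdry{A})$ to a class $\zeta$; I would show $\zeta \ne 0$, for otherwise $A$ would be deformable rel $\bdry{A}$ into $E^\alpha$, and as $\alpha < \beta$ forces $\bdry{B} \cap E^\alpha = \emptyset$ this contradicts the linking. Feeding $\zeta$ through the Morse principle, together with the minimax level $c_2$, yields $u_2$ with $C_{l+1}(E,u_2) \ne 0$ and $E(u_2) \in [\beta, b_0]$. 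In degree $l$, I would apply the connecting homomorphism of the triple $E^{a_0 - \epsilon} \subset E^\alpha \subset E^{b_0}$,
\[
\partial_\ast : H_{l+1}(E^{b_0}, E^\alpha) \to H_l(E^\alpha, E^{a_0 - \epsilon}),
\]
and show that $\partial_\ast \zeta \ne 0$; the Morse principle in degree $l$ then gives $u_1$ with $C_l(E,u_1) \ne 0$ and, after letting $\epsilon \to 0$, $E(u_1) \in [a_0, \alpha]$.

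I expect the nonvanishing of $\partial_\ast \zeta$ --- the homological linking of $\bdry{A}$ with the barrier $B$ --- to be the main obstacle. The degree-$(l+1)$ class $\zeta$ is detected by the clean geometric linking of the two spheres, but the degree-$l$ statement is more delicate: the sublevel set $E^\alpha$ already meets $B$ (indeed $0 \in A \cap B$ with $E(0) \le \alpha$), so one cannot embed $E^\alpha$ in $X \setminus B$ and read $\partial_\ast\zeta \ne 0$ off a map $H_l(\bdry{A}) \to H_l(X \setminus B)$; the linking with $B$ must instead be extracted from the pair $(X, X \setminus B)$, allowing for $Z$ to be infinite-dimensional so that $\bdry{B}$ may itself be contractible. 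Pinning the level of $u_1$ to $[a_0, \alpha]$ as $\epsilon \to 0$ is the remaining subtlety. It is exactly at these points that \PS{c} and the finiteness of the critical set are consumed: they provide the deformation retractions of sublevel sets across regular intervals that convert the geometric linking into the nonvanishing relative homology feeding the Morse principle.
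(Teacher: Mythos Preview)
Your overall plan---read the theorem as a homological linking statement and feed nonvanishing relative homology of sublevel sets into the Morse principle (the paper's Proposition~\ref{Proposition 1})---is exactly the paper's strategy. But two steps in your implementation do not go through as written. First, your argument that $\zeta \ne 0$ (``otherwise $A$ would be deformable rel $\bdry{A}$ into $E^\alpha$'') confuses homology with homotopy: vanishing of a relative homology class does not yield such a deformation, so the contradiction with topological linking is not available in that form. Second, and more structurally, your level estimate $E(u_2) \ge \beta$ does not follow: applying the Morse principle to $H_{l+1}(E^{b_0},E^\alpha)$ with your $\alpha = \sup_{\bdry{A}} E$ only gives a critical point with level above $\alpha$, not above $\beta = \inf_{\bdry{B}} E$, and the separate minimax $c_2$ produces a critical \emph{value} in $[\beta,b_0]$ but attaches no critical-group information to any point at that level. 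You cannot conflate the two outputs.

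The paper's single adjustment dissolves both difficulties and the ``main obstacle'' you flag: it places the middle cut strictly inside the gap the hypotheses provide. Choosing regular values $\alpha' < \beta' < \gamma'$ with $a < \alpha' < \inf_B E$, $\sup_{\bdry{A}} E < \beta' < \inf_{\bdry{B}} E$, $\sup_A E < \gamma' < b$, and (by finiteness of critical values) with no critical value in the slack intervals, one has \emph{both} inclusions $\bdry{A} \hookrightarrow E^{\beta'}$ and $E^{\beta'} \hookrightarrow X \setminus \bdry{B}$, so the homological linking $\xi_\ast : \widetilde{H}_l(\bdry{A}) \to \widetilde{H}_l(X \setminus \bdry{B})$ factors as $\psi_\ast \varphi_\ast$ with each factor nontrivial. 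From $\varphi_\ast \ne 0$ and $\widetilde{H}_l(A) = 0$ the exact sequence of $(E^{\gamma'},E^{\beta'})$ gives $H_{l+1}(E^{\gamma'},E^{\beta'}) \ne 0$; from $\psi_\ast \ne 0$ together with $E^{\alpha'} \subset X \setminus B$ (valid precisely because $\alpha' < \inf_B E$) and the contractibility of $X \setminus B$ in infinite dimensions, the exact sequence of $(E^{\beta'},E^{\alpha'})$ gives $H_l(E^{\beta'},E^{\alpha'}) \ne 0$. The sharp level bounds for $u_1,u_2$ then come for free from how $\alpha',\beta',\gamma'$ were chosen, and no minimax is needed. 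Your $\partial_\ast \zeta$ route and your instinct to use the pair $(X,X\setminus B)$ are salvageable along exactly these lines, but only after you slide the middle level from $\sup_{\bdry{A}} E$ into the open interval $(\sup_{\bdry{A}} E,\inf_{\bdry{B}} E)$.
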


To show that $u_1$ and $u_2$ are nontrivial and complete the proof of Theorem \ref{Theorem 2}, we will show that $C_q(E,0) = 0$ for all $q \ge 0$. This will be based on two intermediate results of independent interest on the critical groups of the functional
\[
E(u) = \int_\Omega \left(\half\, |\nabla u|^2 - H(x,u) - \frac{1}{p}\, |u|^p\right) dx, \quad u \in H^1_0(\Omega)
\]
at $0$, where $p > 2$ if $N = 2$ and $2 < p \le 2^\ast$ if $N \ge 3$, $H(x,t) = \int_0^t h(x,s)\, ds$, and $h$ is a Carath\'{e}odory function on $\Omega \times \R$ satisfying
\begin{equation} \label{11}
|h(x,t)| \le C(|t|^{r-1} + 1) \quad \forall (x,t) \in \Omega \times \R
\end{equation}
for some $C > 0$ and $2 < r < p$. The first is the following localization result.

\begin{theorem} \label{Theorem 6}
Assume that \eqref{11} holds. Let $\delta > 0$, let $\vartheta : \R \to [- \delta,\delta]$ be a smooth nondecreasing function such that $\vartheta(t) = - \delta$ for $t \le - \delta$, $\vartheta(t) = t$ for $- \delta/2 \le t \le \delta/2$, and $\vartheta(t) = \delta$ for $t \ge \delta$, and set
\[
\widetilde{E}(u) = \int_\Omega \left(\half\, |\nabla u|^2 - H(x,\vartheta(u)) - \frac{1}{p}\, |\vartheta(u)|^p\right) dx, \quad u \in H^1_0(\Omega).
\]
If $0$ is an isolated critical point of $E$, then it is also an isolated critical point of $\widetilde{E}$ and
\[
C_q(E,0) \isom C_q(\widetilde{E},0) \quad \forall q \ge 0.
\]
\end{theorem}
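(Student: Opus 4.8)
The plan is to establish two facts: that $0$ is an isolated critical point of $\widetilde{E}$, and that $C_q(E,0) \isom C_q(\widetilde{E},0)$ for all $q$. The essential difficulty is that $E$ and $\widetilde{E}$ do \emph{not} agree on any ball $\set{\norm{u} < r}$ of $H^1_0(\Omega)$, since a function of small $H^1_0$-norm may take large pointwise values; hence one cannot simply invoke the locality of critical groups. Instead I would use elliptic regularity to compare critical points, and connect $E$ to $\widetilde{E}$ through a homotopy of functionals along which $0$ stays uniformly isolated and a local compactness survives in spite of the critical exponent. (The point of the truncation, exploited later, is that $\widetilde{E}$ is coercive, of the form identity minus a compact map, and satisfies (PS) globally; but for Theorem \ref{Theorem 6} itself only local information at $0$ is needed.)

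First I would show that $0$ is isolated for $\widetilde{E}$. A critical point $u$ of $\widetilde{E}$ solves $- \Delta u = [h(x,\vartheta(u)) + |\vartheta(u)|^{p-2}\, \vartheta(u)]\, \vartheta'(u)$ weakly, where the right-hand side is bounded in $L^\infty(\Omega)$ because $|\vartheta(u)| \le \delta$ and $h(x,\cdot)$ is bounded on $[-\delta,\delta]$ by \eqref{11}. Suppose, for contradiction, that there are nonzero critical points $u_n \to 0$ in $H^1_0(\Omega)$. Since the embedding $H^1_0(\Omega) \hookrightarrow L^{2^\ast}(\Omega)$ is continuous, $u_n \to 0$ in $L^{2^\ast}(\Omega)$ and, along a subsequence, a.e.; using the boundedness of $\vartheta,\vartheta'$ and dominated convergence, the right-hand sides tend to $0$ in every $L^q(\Omega)$, and a standard bootstrap gives $u_n \to 0$ in $L^\infty(\Omega)$, so $\norm[\infty]{u_n} \le \delta/2$ for large $n$. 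On the set where $|u_n| \le \delta/2$ one has $\vartheta(u_n) = u_n$ and $\vartheta'(u_n) = 1$, hence $u_n$ solves $- \Delta u_n = h(x,u_n) + |u_n|^{p-2}\, u_n$ and is a nonzero critical point of $E$ arbitrarily close to $0$, contradicting the isolatedness of $0$ for $E$.

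To compare critical groups I would set $\vartheta_s = (1 - s)\, \mathrm{id} + s\, \vartheta$ for $s \in [0,1]$ and define $E_s$ by replacing $\vartheta$ with $\vartheta_s$ in $\widetilde{E}$, so that $E_0 = E$ and $E_1 = \widetilde{E}$. Every $\vartheta_s$ equals the identity on $[-\delta/2,\delta/2]$ and fixes $0$, so (since $0$ is a critical point of $E$, forcing $h(x,0) = 0$) each $E_s$ has $0$ as a critical point; moreover the regularity argument above, now run with a varying sequence $s_n \to s$ and using $|\vartheta_s(t)| \le |t|$ together with $u_n \to 0$ in $L^{2^\ast}(\Omega)$, shows that $0$ is the only critical point of $E_s$ in some fixed ball $\overline{B_r}$, uniformly in $s$. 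The crux is a uniform local (PS) condition in $\overline{B_r}$ near the level $0$: if $u_n \in \overline{B_r}$ is a (PS) sequence for $E_{s_n}$ with $s_n \to s$, then $u_n \rightharpoonup u$, the limit $u$ is a critical point of $E_s$, and, setting $v_n = u_n - u$, the Brezis--Lieb lemma together with the asymptotic equation forces $\norm{v_n}^2$ to match an ($s$-dependent but $\le 1$) multiple of $\int_\Omega |v_n|^{2^\ast}\, dx$; the Sobolev inequality then forces any nonvanishing $v_n$ to satisfy $\norm{v_n} \ge c_0$ with $c_0 > 0$ independent of $s$. Choosing $r < c_0/2$ rules this out and yields $u_n \to u$ strongly.

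The hard part is precisely this uniform local compactness: it is where the critical Sobolev exponent is tamed, and it relies on the facts that $|\vartheta_s(t)| \le |t|$ keeps the critical energy of a bubble bounded below uniformly in $s$ and that working at energy near $0$ in a small ball keeps us below that bubbling threshold. Granting it, uniform isolation together with the uniform local (PS) condition makes $\set{E_s}_{s \in [0,1]}$ an admissible homotopy for the invariance of critical groups --- a consequence of the second deformation lemma applied to the family --- so that $C_q(E_s,0)$ is independent of $s$, and in particular $C_q(E,0) \isom C_q(\widetilde{E},0)$ for all $q \ge 0$.
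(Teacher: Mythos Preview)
Your plan matches the paper's: interpolate between $E$ and $\widetilde{E}$ by a one-parameter family, prove uniform isolation of $0$ via elliptic regularity, establish a uniform local Palais--Smale property that beats the critical exponent, and conclude by homotopy invariance of critical groups (the paper's Proposition~\ref{Proposition 2}). The paper's interpolation differs only in the critical term --- it takes $(1-\tau)\,|u|^{2^\ast} + \tau\,|\vartheta(u)|^{2^\ast}$ rather than your $|\vartheta_s(u)|^{2^\ast}$ --- which lets the concentration analysis factor cleanly through the coefficient $(1-\tau)$ and reduces local~(PS) to an energy threshold; your direct norm-threshold argument (concentration would force $\norm{v_n}\ge c_0$, so work in a ball of radius $<c_0$) is a legitimate alternative.

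Two points need tightening. First, the inequality $|\vartheta(t)| \le |t|$ is not true in general: a smooth nondecreasing $\vartheta$ with $\vartheta(t)=t$ on $[-\delta/2,\delta/2]$ and $\vartheta\equiv\pm\delta$ for $|t|\ge\delta$ must have $\vartheta'>1$ somewhere on $(\delta/2,\delta)$ (since $\vartheta'(\delta)=0$ while the average of $\vartheta'$ over that interval is $1$), hence $\vartheta(t)>t$ there. What survives, and is all your dichotomy needs, is $|\vartheta(t)|\le C|t|$ and $0\le\vartheta'\le C$; the threshold $c_0$ then depends on $\vartheta$ but not on $s$. Second, for the uniform isolation along the homotopy the equation carries genuine $|u|^{2^\ast-1}$ growth when $s<1$, and a ``standard bootstrap'' starting from $u_n\to 0$ in $L^{2^\ast}$ does \emph{not} improve integrability: one step of Calder\'on--Zygmund plus Sobolev returns you exactly to $L^{2^\ast}$. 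What works is a Brezis--Kato/Moser iteration, exploiting that $u_n\to 0$ in $L^{2^\ast}$ forces the potential $|u_n|^{2^\ast-2}$ to be small in $L^{N/2}$. The paper handles this step by invoking an $L^\infty$ bound under equi-integrability of $|u_n|^{2^\ast}$, which is implied by $u_n\to 0$ in $L^{2^\ast}$.
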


Henceforth, $\isom$ will denote the group isomorphism. This result is somewhat surprising given that $H^1_0(\Omega)$ is not embedded in $L^\infty(\Omega)$ when $N \ge 2$. Our second intermediate result is the following critical group computation.

\begin{theorem} \label{Theorem 4}
Assume that \eqref{11} holds and
\begin{equation} \label{13}
h(x,t) = \mu\, |t|^{\sigma - 2}\, t + \o(|t|^{\sigma - 1}) \quad \text{as } t \to 0
\end{equation}
uniformly a.e. in $\Omega$ for some $\mu > 0$ and $\sigma \in (1,2)$. If $0$ is an isolated critical point of $E$, then
\[
C_q(E,0) = 0 \quad \forall q \ge 0.
\]
\end{theorem}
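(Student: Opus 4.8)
The plan is to first invoke the localization result (Theorem~\ref{Theorem 6}) to replace $E$ by the truncated functional $\widetilde E$, for which the nonlinear terms $H(x,\vartheta(u))$ and $|\vartheta(u)|^p$ take values in a bounded range; since $C_q(E,0)\isom C_q(\widetilde E,0)$, it suffices to prove $C_q(\widetilde E,0)=0$ for all $q$. The truncation is essential because it lets me control the (possibly critical) power term by the concave term: fixing $\delta>0$ small, \eqref{13} gives $H(x,t)\ge\frac{\mu}{2\sigma}\,|t|^\sigma\ge0$ for $|t|\le\delta$, hence $H(x,\vartheta(u))\ge\frac{\mu}{2\sigma}\,|\vartheta(u)|^\sigma$, and from $|\vartheta(u)|\le\delta$ one also gets $\int_\Omega|\vartheta(u)|^p\le\delta^{p-\sigma}\int_\Omega|\vartheta(u)|^\sigma$.

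The central structural claim is that there is $\rho>0$ such that for every $w$ with $0<\norm{w}\le\rho$ and $\widetilde E(w)=0$ one has $\langle\widetilde E'(w),w\rangle>0$. I would prove this by using the relation $\widetilde E(w)=0$ to substitute $\norm{w}^2=2\int_\Omega H(x,\vartheta(w))+\frac2p\int_\Omega|\vartheta(w)|^p$ into the expression for $\langle\widetilde E'(w),w\rangle$; after this substitution the quadratic part cancels, and using $h(x,t)\,t=\sigma H(x,t)+\o(|t|^\sigma)$ from \eqref{13} together with the facts that $\vartheta'(w)\,w=\vartheta(w)$ where $|w|\le\delta/2$, that $\vartheta'(w)=0$ where $|w|\ge\delta$, and that $|\vartheta'(w)\,w|\le\delta$ everywhere, the dominant surviving term is $(2-\sigma)\int_\Omega H(x,\vartheta(w))>0$, while the competing power contribution is $\le C\,\delta^{p-\sigma}\int_\Omega|\vartheta(w)|^\sigma$, which is negligible once $\delta$ and $\rho$ are small.

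Granting the claim, I read off a cone structure on the zero sublevel set $D=\set{w\in\overline{B}_\rho:\widetilde E(w)\le0}$. For each fixed direction $u$ on the unit sphere $S$ of $H^1_0(\Omega)$, dominated convergence and $\sigma<2$ give $\widetilde E(su)<0$ for all small $s>0$, while the claim forces $s\mapsto\widetilde E(su)$ to cross $0$ transversally from below at most once on $(0,\rho]$; hence $\set{s\in(0,\rho]:\widetilde E(su)\le0}=(0,s_u]$ with $s_u>0$ depending continuously on $u$ (via the implicit function theorem, with $s_u=\rho$ on those directions where $\widetilde E(su)<0$ throughout). Thus $D$ is star-shaped about $0$, hence contractible, whereas $D\setminus\set{0}=\set{su:u\in S,\,0<s\le s_u}$ deformation retracts, by pushing $s$ up to $s_u$, onto $\set{s_u u:u\in S}\isom S$. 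Since $H^1_0(\Omega)$ is infinite dimensional, $S$ is contractible, so $D\setminus\set{0}$ is contractible as well. Taking $U=B_\rho$ in the definition of the critical groups and applying the long exact homology sequence of the pair $(D,D\setminus\set{0})$, with both terms contractible, yields $C_q(\widetilde E,0)=H_q(D,D\setminus\set{0})=0$ for all $q$, and therefore $C_q(E,0)=0$ by Theorem~\ref{Theorem 6}.

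The main obstacle is the transversality claim $\langle\widetilde E'(w),w\rangle>0$ on the zero level set. Under radial scaling the concave term $H\sim\frac{\mu}{\sigma}|\cdot|^\sigma$ and the power term $\frac1p|\cdot|^p$ push $\widetilde E$ in opposite directions, and without a pointwise bound on $w$ the power term cannot be absorbed; this is precisely where the truncation is indispensable, since it supplies $\int_\Omega|\vartheta(w)|^p\le\delta^{p-\sigma}\int_\Omega|\vartheta(w)|^\sigma$ and lets the concave term dominate. It is worth stressing that $0$ is in general \emph{not} a local maximum of $\widetilde E$ — thin tall spikes have arbitrarily small norm yet $\widetilde E>0$ — so the cone $D$ has a genuinely variable radius $s_u$ rather than filling all of $B_\rho$, and it is the infinite dimensionality of $H^1_0(\Omega)$, entering through the contractibility of $S$, that ultimately kills every critical group.
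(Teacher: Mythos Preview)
Your overall architecture matches the paper's: localize via Theorem~\ref{Theorem 6} and then extract a radial cone structure on the zero sublevel set near the origin, so that both the sublevel set and its punctured version are contractible. The gap is in your central transversality claim for $\widetilde{E}$. On the set $\{\delta/2<|w|<\delta\}$ one has neither $\vartheta'(w)\,w=\vartheta(w)$ nor $\vartheta'(w)=0$, so the substitution $h(x,\vartheta)\,\vartheta'w\approx\sigma H(x,\vartheta)$ is unjustified there. In fact any smooth $\vartheta$ with $\vartheta'(\delta/2)=1$, $\vartheta'(\delta)=0$ and $\int_{\delta/2}^{\delta}\vartheta'=\delta/2$ must satisfy $\sup\vartheta'>1$, so your assertion $|\vartheta'(w)\,w|\le\delta$ is false; and the integrand
\[
2H(x,\vartheta(w))-h(x,\vartheta(w))\,\vartheta'(w)\,w+\tfrac{2}{p}\,|\vartheta(w)|^{p}-|\vartheta(w)|^{p-2}\vartheta(w)\,\vartheta'(w)\,w
\]
can be pointwise negative on this transition layer (take $\sigma$ close to $2$ and a point where $\vartheta'(w)\,w>\vartheta(w)$). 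Since there is no $L^\infty$-control, functions $w$ with small $\norm{w}$ can concentrate on this layer, and the sign of $\langle\widetilde{E}'(w),w\rangle$ on $\{\widetilde{E}=0\}$ is not established.

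The paper closes exactly this hole by passing from $\widetilde{E}$ to a second auxiliary functional $\bar{E}$, obtained by extending $h$ linearly and replacing $|t|^{p-2}t$ by $\delta^{p-2}t$ for $|t|>\delta$. Because $\widetilde{E}$ is now subcritical, a separate (and elementary) localization lemma gives $C_q(\widetilde{E},0)\isom C_q(\bar{E},0)$. For $\bar{E}$ there is no smoothed transition: one computes
\[
\frac{d}{dt}\,\bar{E}(tu)=\frac{2}{t}\,\bar{E}(tu)+\frac{1}{t}\int_\Omega P(x,tu)\,dx,
\]
where $P(x,t)=2H(x,t)-th(x,t)-(1-\tfrac{2}{p})|t|^p$ for $|t|\le\delta$ and $P(x,t)=P(x,\pm\delta)$ for $|t|>\delta$, and condition \eqref{29} (which follows from \eqref{13}) gives $P>0$ pointwise for $t\ne0$. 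This yields the implication $\bar{E}(tu)\ge0\Rightarrow\frac{d}{dt}\bar{E}(tu)>0$ and hence precisely the cone structure you describe. So the missing ingredient in your sketch is this second truncation; without it the transversality step does not go through as written.
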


The rest of this paper is organized as follows. The preliminary results in Theorems \ref{Theorem 3}--\ref{Theorem 4} will be proved in the next section. Our main results in Theorems \ref{Theorem 1} and \ref{Theorem 2} are proved in Section \ref{Proofs}.

\section{Proofs of intermediate results}

Let $E$ be a $C^1$-functional on a Banach space $X$. We recall that $E$ satisfies the \PS{c} condition if every sequence $\seq{u_j} \subset X$ such that $E(u_j) \to c$ and $E'(u_j) \to 0$, called a \PS{c} sequence of $E$, has a convergent subsequence. The proof of Theorem \ref{Theorem 3} is based on the following well-known result from Morse theory (see Chang \cite[Chapter II, Theorem 1.5]{MR94e:58023}).

\begin{proposition} \label{Proposition 1}
Assume that $H_l(E^b,E^a) \ne 0$, where $a < b$ are regular values of $E$ and $l \ge 0$. If $E$ satisfies the {\em \PS{c}} condition for all $c \in [a,b]$ and has only a finite number of critical points in $E^{-1}((a,b))$, then $E$ has a critical point $u_0$ with
\[
a < E(u_0) < b, \qquad C_l(E,u_0) \ne 0.
\]
\end{proposition}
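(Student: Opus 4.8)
The plan is to prove Proposition \ref{Proposition 1} by the standard machinery of Morse theory, reducing the global homological hypothesis $H_l(E^b,E^a) \ne 0$ to a statement about the critical groups at a single critical level. Since $E$ has only finitely many critical points in $E^{-1}((a,b))$, each such point is isolated, so its critical groups are well defined, and there are only finitely many critical values in $(a,b)$, say $a < c_1 < \cdots < c_m < b$. Because $a$ and $b$ are regular values and the \PS{c} condition holds for every $c \in [a,b]$, I can choose regular values $a = a_0 < a_1 < \cdots < a_m = b$ such that each interval $(a_{j-1},a_j)$ contains exactly the one critical value $c_j$. This produces a filtration $E^a = E^{a_0} \subseteq E^{a_1} \subseteq \cdots \subseteq E^{a_m} = E^b$.

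Two ingredients from the first deformation lemma enter next; both are available here thanks to \PS{c} and the absence of critical values in the relevant ranges. The first is that across a single critical level, excision around the finitely many isolated critical points $u_{j,1},\dots,u_{j,n_j}$ at level $c_j$ yields the decomposition
\[
H_\ast(E^{a_j},E^{a_{j-1}}) \isom \bigoplus_{i=1}^{n_j} C_\ast(E,u_{j,i}).
\]
The second, used implicitly, is that over an interval of values containing no critical value the lower sublevel set is a deformation retract of the upper one, so the corresponding relative homology vanishes; this is precisely what forces critical points to exist.

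The core step is a purely homological localization. From the long exact sequence of the triple $(E^{a_j},E^{a_{j-1}},E^{a_0})$,
\[
\cdots \to H_l(E^{a_{j-1}},E^{a_0}) \to H_l(E^{a_j},E^{a_0}) \to H_l(E^{a_j},E^{a_{j-1}}) \to \cdots,
\]
exactness shows that if both outer groups vanish then so does $H_l(E^{a_j},E^{a_0})$. Starting from $H_l(E^{a_m},E^{a_0}) = H_l(E^b,E^a) \ne 0$ and inducting downward on $j$, I conclude that $H_l(E^{a_j},E^{a_{j-1}}) \ne 0$ for at least one $j$. Combined with the decomposition above, this forces $C_l(E,u_{j,i}) \ne 0$ for some $i$; taking $u_0 = u_{j,i}$, which satisfies $a < c_j = E(u_0) < b$, completes the proof.

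The main obstacle is technical rather than conceptual: rigorously establishing the excision isomorphism $H_\ast(E^{a_j},E^{a_{j-1}}) \isom \bigoplus_i C_\ast(E,u_{j,i})$ in the infinite-dimensional Banach space setting requires constructing a negative pseudo-gradient flow and applying the deformation lemma to push $E^{a_j}$ down onto the union of $E^{a_{j-1}}$ with small neighborhoods of the critical points at level $c_j$. Since this is exactly the content of the local Morse theory developed in Chang \cite{MR94e:58023}, I would invoke those deformation results rather than reprove them, and reserve the genuinely new work for the clean exact-sequence localization argument.
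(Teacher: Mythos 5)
Your argument is correct: the filtration by regular sublevel sets $a = a_0 < a_1 < \cdots < a_m = b$ separating the finitely many critical values, the decomposition $H_\ast(E^{a_j},E^{a_{j-1}}) \isom \bigoplus_i C_\ast(E,u_{j,i})$ across each isolated critical level, and the exact sequence of the triple $(E^{a_j},E^{a_{j-1}},E^{a_0})$ together yield a level $c_j \in (a,b)$ and a point $u_0$ with $C_l(E,u_0) \ne 0$, and your implicit choice of the separating $a_j$ is legitimate since every value in $(a,b)$ other than $c_1,\dots,c_m$ is automatically regular (the critical points there being finite in number). The paper gives no proof of its own, citing instead Chang \cite[Chapter II, Theorem 1.5]{MR94e:58023}, and your proof is precisely the standard deformation-plus-excision argument underlying that citation, so the two approaches coincide.
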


We now prove Theorem \ref{Theorem 3}. For the sake of simplicity we only consider the case where $X$ is infinite dimensional.

\begin{proof}[Proof of Theorem \ref{Theorem 3}]
Since $B \cap \bdry{A}$ and $\bdry{B} \cap A$ are nonempty,
\[
\inf_B\, E \le \sup_{\bdry{A}}\, E, \qquad \inf_{\bdry{B}}\, E \le \sup_A\, E.
\]
We will show that if $\alpha < \beta < \gamma$ satisfy
\[
a < \alpha < \inf_B\, E, \qquad \sup_{\bdry{A}}\, E < \beta < \inf_{\bdry{B}}\, E, \qquad \sup_A\, E < \gamma < b,
\]
then
\begin{equation} \label{14}
H_l(E^\beta,E^\alpha) \ne 0, \qquad H_{l+1}(E^\gamma,E^\beta) \ne 0.
\end{equation}
Proposition \ref{Proposition 1} then gives a pair of critical points $u_1, u_2$ of $E$ with
\[
\alpha < E(u_1) < \beta, \qquad \beta < E(u_2) < \gamma
\]
and
\[
C_l(E,u_1) \ne 0, \qquad C_{l+1}(E,u_2) \ne 0
\]
if, in addition, $\alpha$, $\beta$, and $\gamma$ are regular values of $E$. Since $E$ has only a finite number of critical values in $(a,b)$, $\alpha$ and $\beta$ can be chosen so that $E$ has no critical values in $[\alpha,\inf E(B))$ or $(\sup E(\bdry{A}),\beta]$, and hence
\[
\inf_B\, E \le E(u_1) \le \sup_{\bdry{A}}\, E.
\]
Similarly, $u_2$ can be chosen to satisfy
\[
\inf_{\bdry{B}}\, E \le E(u_2) \le \sup_A\, E.
\]

To prove \eqref{14}, recall that $\bdry{A}$ homologically links $\bdry{B}$ in dimension $l$ in the sense that the inclusion $\xi : \bdry{A} \hookrightarrow X \setminus \bdry{B}$ induces a nontrivial homomorphism
\[
\begin{CD}
\xi_\ast : \widetilde{H}_l(\bdry{A}) @>>> \widetilde{H}_l(X \setminus \bdry{B})
\end{CD}
\]
between reduced homology groups (see, e.g., Perera and Schechter \cite[Propositions 2.4.2 and 2.4.4]{MR3012848}). Since $E < \beta$ on $\bdry{A}$ and $E > \beta$ on $\bdry{B}$, we also have inclusions $\varphi : \bdry{A} \hookrightarrow E^\beta$ and $\psi : E^\beta \hookrightarrow X \setminus \bdry{B}$, which induce the commutative diagram
\[
\xymatrix{{\widetilde{H}}_l(\bdry{A}) \ar[r]^{\varphi_\ast} \ar[rd]_{\xi_\ast} & {\widetilde{H}}_l(E^\beta) \ar[d]^{\psi_\ast}\\
& {\widetilde{H}}_l(X \setminus \bdry{B}).}
\]
This gives
\[
\psi_\ast\, \varphi_\ast = \xi_\ast \ne 0,
\]
so both $\varphi_\ast$ and $\psi_\ast$ are nontrivial homomorphisms.

First we show that $H_l(E^\beta,E^\alpha) \ne 0$. Since $E > \alpha$ on $B$ and $\alpha < \beta$, we have the inclusions $E^\alpha \hookrightarrow X \setminus B \hookrightarrow X \setminus \bdry{B}$ and $E^\alpha \hookrightarrow E^\beta \hookrightarrow X \setminus \bdry{B}$, which give the commutative diagram
\[
\begin{CD}
\widetilde{H}_l(E^\alpha) @>i_\ast>> \widetilde{H}_l(E^\beta)\\
@VVV @VV{\psi_\ast}V\\
\widetilde{H}_l(X \setminus B) @>>> \widetilde{H}_l(X \setminus \bdry{B}).
\end{CD}
\]
Since $B$ is contractible, $X \setminus B$ has the homotopy type of the unit sphere in $X$, which is itself contractible since $X$ is infinite dimensional, so $\widetilde{H}_l(X \setminus B) = 0$. This together with the commutativity of the square gives
\[
\psi_\ast\, i_\ast = 0.
\]
Since $\psi_\ast$ is nontrivial, this implies that $i_\ast$ is non-surjective. Now consider the exact sequence of the pair $(E^\beta,E^\alpha)$:
\[
\begin{CD}
\cdots @>\partial_\ast>> \widetilde{H}_l(E^\alpha) @>i_\ast>> \widetilde{H}_l(E^\beta) @>j_\ast>> H_l(E^\beta,E^\alpha) @>\partial_\ast>> \cdots.
\end{CD}
\]
By exactness,
\[
\ker j_\ast = \im i_\ast \ne \widetilde{H}_l(E^\beta),
\]
so the relative homology group $H_l(E^\beta,E^\alpha) \ne 0$.

Finally we show that $H_{l+1}(E^\gamma,E^\beta) \ne 0$. Since $E < \gamma$ on $A$ and $\beta < \gamma$, we have the inclusions $\bdry{A} \hookrightarrow A \hookrightarrow E^\gamma$ and $\bdry{A} \hookrightarrow E^\beta \hookrightarrow E^\gamma$, yielding the commutative diagram
\[
\begin{CD}
\widetilde{H}_l(\bdry{A}) @>\varphi_\ast>> \widetilde{H}_l(E^\beta)\\
@VVV @VV{i_\ast}V\\
\widetilde{H}_l(A) @>>> \widetilde{H}_l(E^\gamma).
\end{CD}
\]
Since $A$ is contractible and hence $\widetilde{H}_l(A) = 0$, this gives
\[
i_\ast\, \varphi_\ast = 0.
\]
Since $\varphi_\ast$ is nontrivial, this implies that $i_\ast$ is non-injective. Now consider the exact sequence of the pair $(E^\gamma,E^\beta)$:
\[
\begin{CD}
\cdots @>j_\ast>> H_{l+1}(E^\gamma,E^\beta) @>\partial_\ast>> \widetilde{H}_l(E^\beta) @>i_\ast>> \widetilde{H}_l(E^\gamma) @>j_\ast>> \cdots.
\end{CD}
\]
By exactness,
\[
\im \partial_\ast = \ker i_\ast \ne 0,
\]
so $H_{l+1}(E^\gamma,E^\beta) \ne 0$.
\end{proof}

Next we turn to the proof of Theorem \ref{Theorem 6}, which will be based on the following result on the invariance of critical groups under homotopies that preserve the isolatedness of the critical point (see Chang and Ghoussoub \cite{MR1422006} or Corvellec and Hantoute \cite{MR1926378}).

\begin{proposition} \label{Proposition 2}
Let $E_\tau,\, \tau \in [0,1]$ be a family of $C^1$-functionals on a Banach space $X$, and let $u_0$ be a critical point of each $E_\tau$. Assume that there exists a closed neighborhood $U$ of $u_0$ such that
\begin{enumroman}
\item the map $[0,1] \to C^1(U,\R),\, \tau \mapsto E_\tau$ is continuous,
\item $U$ contains no other critical point of any $E_\tau$,
\item each $E_\tau$ satisfies the {\em \PS{c}} condition on $U$ for all $c \in \R$.
\end{enumroman}
Then
\[
C_q(E_0,u_0) \isom C_q(E_1,u_0) \quad \forall q \ge 0.
\]
\end{proposition}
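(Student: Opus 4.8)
The plan is to show that the isomorphism class of $C_q(E_\tau,u_0)$ is locally constant in $\tau$, and then to invoke the connectedness of $[0,1]$: a locally constant map on a connected set is constant, so this at once gives $C_q(E_0,u_0) \isom C_q(E_1,u_0)$ for every $q$. The entire proof thus reduces to the following local claim. For each fixed $\tau_0 \in [0,1]$ there is $\eps > 0$ such that $C_q(E_\tau,u_0) \isom C_q(E_{\tau_0},u_0)$ for all $q$ whenever $\tau \in [0,1]$ and $|\tau - \tau_0| < \eps$.

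To prove the local claim I would use Gromoll--Meyer pairs. Recall that a Gromoll--Meyer pair for a functional $E$ at an isolated critical point $u_0$ is a pair $(W,W^-)$ of closed sets, with $W$ a closed neighborhood of $u_0$ in which $u_0$ is the only critical point and $W^- \subset \bdry{W}$ the exit set of a negative pseudo-gradient flow for $E$ on $W$; such a pair always exists and satisfies $C_q(E,u_0) \isom H_q(W,W^-)$ for all $q$ (see Chang \cite{MR94e:58023}). First I would fix $\tau_0$ and construct a Gromoll--Meyer pair $(W,W^-)$ for $E_{\tau_0}$ lying in the interior of $U$, using a pseudo-gradient vector field $V_{\tau_0}$ for $E_{\tau_0}$. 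Hypotheses (ii) and (iii) are exactly what makes this standard construction go through on $U$, since $u_0$ is the only critical point of $E_{\tau_0}$ there and $E_{\tau_0}$ satisfies \PS{c} for every $c$.

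The heart of the matter is to show that the one fixed pair $(W,W^-)$ simultaneously computes the critical groups of all nearby $E_\tau$. The mechanism is a uniform lower bound on the gradient away from $u_0$: combining (ii) and (iii) through a compactness argument based on \PS{c} first gives $\eps_0 > 0$ with $\norm{E_{\tau_0}'(u)} \ge \eps_0$ for all $u \in W$ that are bounded away from $u_0$, and the $C^1$-continuity in (i) then propagates this to $\norm{E_\tau'(u)} \ge \eps_0/2$ on the same region for all $\tau$ near $\tau_0$. Because the defining properties of a Gromoll--Meyer pair are expressed through strict (hence open) inequalities on $E$ and $E'$, the field $V_{\tau_0}$ remains an admissible negative pseudo-gradient field for $E_\tau$ on $W$ after a harmless rescaling, and $W^-$ remains its exit set. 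Consequently $(W,W^-)$ is a Gromoll--Meyer pair for $E_\tau$ as well, so $C_q(E_\tau,u_0) \isom H_q(W,W^-) \isom C_q(E_{\tau_0},u_0)$, which is the desired local constancy.

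I expect the main obstacle to lie in this perturbation step, specifically in checking the stability of the exit set: one must verify, uniformly for $\tau$ near $\tau_0$, that the flow lines of $E_\tau$ do not leave $W$ --- and hence the interior of $U$ --- anywhere except through $W^-$, and that every point of $W \setminus W^-$ is carried into $W^-$ or stays in $W$ under the flow. This is precisely where the uniform lower bound on $\norm{E_\tau'}$ is used, and where the fact that \PS{c} is assumed only on $U$ forces the flow to be kept strictly inside $U$ throughout, so that hypothesis (iii) may legitimately be applied.
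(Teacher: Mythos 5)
The paper does not actually prove Proposition \ref{Proposition 2}: it quotes it from Chang--Ghoussoub \cite{MR1422006} and Corvellec--Hantoute \cite{MR1926378}, and your outline (Gromoll--Meyer pairs, local constancy in $\tau$, connectedness of $[0,1]$) follows the first of those sources, so that is the proof to measure against. Against it, your sketch has one genuine gap, and it sits exactly at the step you dismiss as a ``harmless rescaling''. Hypothesis $(i)$ controls $\sup_{u \in U} \norm{E_\tau'(u) - E_{\tau_0}'(u)}$, which is an \emph{absolute} bound, whereas the pseudo-gradient inequalities, say $\norm{V(u)} \le 2\norm{E_\tau'(u)}$ and $\langle E_\tau'(u), V(u)\rangle \ge \norm{E_\tau'(u)}^2$, are \emph{relative} to $\norm{E_\tau'(u)}$. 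Your uniform lower bound $\norm{E_{\tau_0}'(u)} \ge \eps_0$ holds only on the part of $W$ bounded away from $u_0$; on a small ball around $u_0$ both $E_{\tau_0}'(u)$ and $E_\tau'(u)$ are of size comparable to, or smaller than, the allowed perturbation $\eps_0/2$, so $\langle E_\tau'(u), V_{\tau_0}(u)\rangle$ can be negative there, no rescaling of $V_{\tau_0}$ helps, and $E_\tau$ can \emph{increase} along the $V_{\tau_0}$-flow inside $W$. Then $E_\tau$ is not a Lyapunov function for that flow, and the isomorphism $C_q(E_\tau,u_0) \isom H_q(W,W^-)$ --- whose proof deforms sublevel sets of $E_\tau$ along the flow --- is unjustified. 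Your appeal to ``strict, hence open, inequalities'' fails precisely where the inequalities degenerate, namely near the common critical point; and the closing paragraph you flag as the main obstacle discusses only the exit-set region near $\bdry{W}$, where the uniform bound does apply, and never addresses this inner region.

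The repair is standard and is what \cite{MR1422006} in effect does: since $\bdry{W}$ is at positive distance from $u_0$, retain $V_{\tau_0}$ only on a shell around $\bdry{W}$ on which $\norm{E_{\tau_0}'} \ge \eps_0$ (there it is, after rescaling, a descent field for every nearby $E_\tau$), and glue it by a cutoff function to a genuine pseudo-gradient field for $E_\tau$ in the interior of $W$; on the overlap both fields are descent directions for $E_\tau$, so their convex combinations are admissible. Because the glued field coincides with $V_{\tau_0}$ in a neighborhood of $\bdry{W}$, the exit set $W^-$ and the mean value property of $W$ are unchanged, and $(W,W^-)$ is a Gromoll--Meyer pair for $E_\tau$ as well, which restores your local constancy claim. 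With this modification the rest of your argument is sound: the reduction to local constancy plus connectedness is exactly how the cited proofs proceed, your use of $(ii)$--$(iii)$ to extract $\eps_0$ is correct (keep $W$ inside a level slab so that \PS{c} applies), confining the flow to $U$ legitimately compensates for \PS{c} being assumed only on $U$, and the fact that the critical level $E_\tau(u_0)$ may vary with $\tau$ is harmless since $H_q(W,W^-)$ makes no reference to it.
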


The subcritical case $p < 2^\ast$ of Theorem \ref{Theorem 6} follows from Perera and Schechter \cite[Lemma 3.2.1]{MR3012848}, so we only consider the critical case $p = 2^\ast$, whose proof requires more careful estimates. We will apply Proposition \ref{Proposition 2} to the family of functionals
\begin{multline*}
E_\tau(u) = \int_\Omega \bigg(\half\, |\nabla u|^2 - H(x,(1 - \tau)\, u + \tau\, \vartheta(u))\\[5pt]
- \frac{1}{2^\ast}\, \left[(1 - \tau)\, |u|^{2^\ast} + \tau\, |\vartheta(u)|^{2^\ast}\right]\bigg)\, dx, \quad u \in H^1_0(\Omega),\, \tau \in [0,1]
\end{multline*}
in a sufficiently small closed ball $\closure{B_\eps(0)}$, noting that $E_0 = E$ and $E_1 = \widetilde{E}$.

Let
\begin{equation} \label{19}
S = \inf_{u \in H^1_0(\Omega) \setminus \set{0}}\, \frac{\dint_\Omega |\nabla u|^2\, dx}{\left(\dint_\Omega |u|^{2^\ast}\, dx\right)^{2/2^\ast}}
\end{equation}
be the best constant in the Sobolev inequality. First we prove the following lemma on the \PS{c} sequences of $E_\tau$.

\begin{lemma} \label{Lemma 1}
Let $\tau \in [0,1]$, let $c \in \R$, and let $\seq{u_j}$ be a bounded {\em \PS{c}} sequence of $E_\tau$. Then a renamed subsequence of $\seq{u_j}$ converges weakly to a critical point $u$ of $E_\tau$. Moreover, $u_j \to u$ strongly in each of the following cases:
\begin{enumroman}
\item $\tau \in [0,1)$, $c < (1/N)\, S^{N/2}/(1 - \tau)^{(N-2)/2}$, and $u = 0$,
\item $\tau = 1$.
\end{enumroman}
\end{lemma}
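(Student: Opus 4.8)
The plan is to first extract, using the boundedness of $\seq{u_j}$ and the reflexivity of $H^1_0(\Omega)$, a renamed subsequence with $u_j \rightharpoonup u$ in $H^1_0(\Omega)$, $u_j \to u$ in $L^q(\Omega)$ for every $q < 2^\ast$ by the Rellich--Kondrachov theorem, and $u_j \to u$ a.e.\ in $\Omega$. To see that $u$ is a critical point of $E_\tau$, I would pass to the limit in $E_\tau'(u_j)\, v = \o(1)$ for fixed $v \in H^1_0(\Omega)$. The term $\dint_\Omega \nabla u_j \cdot \nabla v\, dx$ converges by weak convergence. The terms carrying the factor $\vartheta(u_j)$ are harmless: since $\vartheta$ takes values in $[-\delta,\delta]$, the quantities $|\vartheta(u_j)|^{2^\ast - 2}\, \vartheta(u_j)\, \vartheta'(u_j)$ are bounded in $L^\infty(\Omega)$ and converge a.e., so dominated convergence applies. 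For the subcritical term involving $h$, the growth bound \eqref{11} together with $u_j \to u$ in $L^r(\Omega)$ shows that $h(x,(1 - \tau)\, u_j + \tau\, \vartheta(u_j))\, [(1 - \tau) + \tau\, \vartheta'(u_j)]$ is bounded in $L^{r'}(\Omega)$ and converges a.e., hence weakly in $L^{r'}(\Omega)$; pairing against $v \in L^r(\Omega)$ gives convergence. Finally, for the critical term, $|u_j|^{2^\ast - 2}\, u_j$ is bounded in $L^{(2^\ast)'}(\Omega)$ and converges a.e.\ to $|u|^{2^\ast - 2}\, u$, hence converges weakly in $L^{(2^\ast)'}(\Omega)$; pairing against $v \in L^{2^\ast}(\Omega)$ yields $E_\tau'(u)\, v = 0$.

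For the strong convergence in case (ii), note that when $\tau = 1$ the functional $E_1 = \widetilde{E}$ only sees $u$ through $\vartheta(u)$, so there is no critical growth at all. I would test the identity $\widetilde{E}'(u_j)(u_j - u) - \widetilde{E}'(u)(u_j - u) = \o(1)$, whose left-hand side equals $\dint_\Omega |\nabla(u_j - u)|^2\, dx$ minus the nonlinear remainders. Because $h(x,\vartheta(u_j))\, \vartheta'(u_j)$ and $|\vartheta(u_j)|^{2^\ast - 2}\, \vartheta(u_j)\, \vartheta'(u_j)$ are bounded in $L^\infty(\Omega)$ while $u_j - u \to 0$ in $L^1(\Omega)$, these remainders vanish, forcing $\norm[2]{\nabla(u_j - u)} \to 0$ and hence $u_j \to u$ in $H^1_0(\Omega)$.

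The heart of the matter is the strong convergence in case (i), which is a Brezis--Nirenberg threshold argument. Here $u = 0$, so every term involving $\vartheta(u_j)$ or the subcritical nonlinearity $h$ tends to $0$: indeed $\dint_\Omega |\vartheta(u_j)|^{2^\ast}\, dx \to 0$ by dominated convergence, and the $h$-contributions vanish because of \eqref{11} and $u_j \to 0$ in $L^r(\Omega)$ (and a fortiori in $L^1(\Omega)$). Consequently $E_\tau(u_j) \to c$ and $E_\tau'(u_j)\, u_j \to 0$ reduce, after passing to a further subsequence along which $\dint_\Omega |\nabla u_j|^2\, dx \to L$, to the relations $(1 - \tau) \dint_\Omega |u_j|^{2^\ast}\, dx \to L$ and $c = (\half - 1/2^\ast)\, L = L/N$. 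The Sobolev inequality \eqref{19} then gives $L \ge S\, (L/(1 - \tau))^{2/2^\ast}$ in the limit, so either $L = 0$ or $L \ge S^{N/2}/(1 - \tau)^{(N-2)/2}$, using $(2^\ast - 2)/2^\ast = 2/N$ and $N/2^\ast = (N-2)/2$. The latter would force $c = L/N \ge (1/N)\, S^{N/2}/(1 - \tau)^{(N-2)/2}$, contradicting the hypothesis; hence $L = 0$ and $u_j \to 0 = u$ strongly in $H^1_0(\Omega)$.

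The main obstacle is precisely this last step: one must carefully verify that all the auxiliary terms (those coming from $\vartheta$ and from $h$) are genuinely negligible, so that the energy and the Nehari-type identity collapse to the pure critical relations, and then identify the correct threshold constant $(1/N)\, S^{N/2}/(1 - \tau)^{(N-2)/2}$ through the Sobolev inequality. The presence of the parameter $\tau$ and of the truncated terms is what distinguishes this from the classical computation and requires the growth bound \eqref{11} and the boundedness of $\vartheta$ to be used in tandem.
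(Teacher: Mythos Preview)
Your proposal is correct and follows essentially the same route as the paper: extract a weakly convergent subsequence and pass to the limit in $E_\tau'(u_j)\,v$ to obtain a critical point, dispose of case~(ii) by the standard subcritical argument (which the paper simply calls ``a standard argument''), and in case~(i) strip away the $\vartheta$- and $h$-contributions using $u=0$, the growth bound \eqref{11}, and dominated convergence to reduce to the two pure relations $\half\int|\nabla u_j|^2-\tfrac{1-\tau}{2^\ast}\int|u_j|^{2^\ast}=c+\o(1)$ and $\int|\nabla u_j|^2-(1-\tau)\int|u_j|^{2^\ast}=\o(1)$, then invoke the Sobolev inequality to force $\int|\nabla u_j|^2\to 0$ below the threshold. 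The only cosmetic difference is that you phrase the final step as a dichotomy on the limit $L$ of $\int|\nabla u_j|^2$, whereas the paper combines the three relations algebraically into the single inequality $\bigl[S^{N/(N-2)}/(1-\tau)-(Nc)^{2/(N-2)}\bigr]\int|\nabla u_j|^2\le \o(1)$; the content is identical.
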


\begin{proof}
We have
\begin{multline} \label{15}
E_\tau(u_j) = \int_\Omega \bigg(\half\, |\nabla u_j|^2 - H(x,(1 - \tau)\, u_j + \tau\, \vartheta(u_j))\\[5pt]
- \frac{1}{2^\ast} \left[(1 - \tau)\, |u_j|^{2^\ast} + \tau\, |\vartheta(u_j)|^{2^\ast}\right]\bigg)\, dx = c + \o(1)
\end{multline}
and
\begin{multline} \label{16}
E_\tau'(u_j)\, v = \int_\Omega \bigg(\nabla u_j \cdot \nabla v - h(x,(1 - \tau)\, u_j + \tau\, \vartheta(u_j))\, (1 - \tau + \tau\, \vartheta'(u_j))\, v\\[5pt]
- \left[(1 - \tau)\, |u_j|^{2^\ast - 2}\, u_j + \tau\, |\vartheta(u_j)|^{2^\ast - 2}\, \vartheta(u_j)\, \vartheta'(u_j)\right] v\bigg)\, dx = \o(\norm{v})
\end{multline}
for all $v \in H^1_0(\Omega)$. Since $\seq{u_j}$ is bounded, a renamed subsequence of $\seq{u_j}$ converges to some function $u$ weakly in $H^1_0(\Omega)$, strongly in $L^\sigma(\Omega)$ for all $\sigma \in [1,2^\ast)$, and a.e.\! in $\Omega$. Passing to the limit in \eqref{16} then shows that $u$ is a critical point of $E_\tau$.

Suppose $\tau \in [0,1)$, $c < (1/N)\, S^{N/2}/(1 - \tau)^{(N-2)/2}$, and $u = 0$. Then \eqref{15} and \eqref{16} with $v = u_j$ imply
\begin{equation} \label{17}
\half \int_\Omega |\nabla u_j|^2\, dx - \frac{1}{2^\ast}\, (1 - \tau) \int_\Omega |u_j|^{2^\ast}\, dx = c + \o(1)
\end{equation}
and
\begin{equation} \label{18}
\int_\Omega |\nabla u_j|^2\, dx - (1 - \tau) \int_\Omega |u_j|^{2^\ast}\, dx = \o(1),
\end{equation}
while \eqref{19} entails
\begin{equation} \label{20}
\int_\Omega |\nabla u_j|^2\, dx \ge S \left(\int_\Omega |u_j|^{2^\ast}\, dx\right)^{2/2^\ast}.
\end{equation}
A straightforward calculation combining \eqref{17}--\eqref{20} gives
\[
\left[\frac{S^{N/(N-2)}}{1 - \tau} - (Nc)^{2/(N-2)}\right] \int_\Omega |\nabla u_j|^2\, dx \le \o(1),
\]
and this implies that $u_j \to 0$ since $c < (1/N)\, S^{N/2}/(1 - \tau)^{(N-2)/2}$.

If $\tau = 1$, then we are in the subcritical case and a standard argument shows that $u_j \to u$.
\end{proof}

Next we show that there is no nonzero critical point of any $E_\tau$ in a sufficiently small closed ball around the origin.

\begin{lemma} \label{Lemma 2}
If $0$ is an isolated critical point of $E$ and $\eps > 0$ is sufficiently small, then $0$ is the only critical point of $E_\tau$ in $\closure{B_\eps(0)}$ for all $\tau \in [0,1]$.
\end{lemma}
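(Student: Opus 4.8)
I would argue by contradiction. If the conclusion fails, then for every $j$ there are $\tau_j \in [0,1]$ and a critical point $u_j \ne 0$ of $E_{\tau_j}$ with $0 < \norm{u_j} \le 1/j$, so $u_j \to 0$ in $H^1_0(\Omega)$; passing to a subsequence I may assume $\tau_j \to \tau \in [0,1]$ and $u_j \to 0$ a.e. The crux of the matter is to show that $\norm[\infty]{u_j} \to 0$. Once this is established, for all large $j$ we have $|u_j| \le \delta/2$ a.e. in $\Omega$, whence $\vartheta(u_j) = u_j$ and $\vartheta'(u_j) = 1$ pointwise. A direct inspection of \eqref{16} then shows that $E_{\tau_j}'(u_j) = E'(u_j)$, so each such $u_j$ is a nonzero critical point of $E = E_0$ converging to $0$, contradicting the assumption that $0$ is an isolated critical point of $E$. (Since $0$ is a critical point of $E$, testing $E'(0) = 0$ against arbitrary $v$ gives $h(x,0) = 0$ for a.e. $x \in \Omega$, a fact I shall use below.)

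To prove $\norm[\infty]{u_j} \to 0$ I would invoke elliptic regularity of Brezis--Kato type. Each $u_j$ solves weakly $- \Delta u_j = K_j\, u_j + g_j$ in $\Omega$, where
\[
K_j = (1 - \tau_j)\, |u_j|^{2^\ast - 2}, \quad g_j = h(x,\xi_j)\,(1 - \tau_j + \tau_j\, \vartheta'(u_j)) + \tau_j\, |\vartheta(u_j)|^{2^\ast - 2}\, \vartheta(u_j)\, \vartheta'(u_j)
\]
and $\xi_j = (1 - \tau_j)\, u_j + \tau_j\, \vartheta(u_j)$. The critical potential satisfies $\norm[N/2]{K_j} \le \norm[2^\ast]{u_j}^{2^\ast - 2} \to 0$, since $(2^\ast - 2)\, N/2 = 2^\ast$. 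For the forcing, \eqref{11} together with $|\xi_j| \le |u_j| + \delta$ gives $|g_j| \le C(|u_j|^{r-1} + 1)$, so that $g_j$ is bounded in $L^{q_0}(\Omega)$ with $q_0 = 2^\ast/(r-1)$; because $r < 2^\ast$ one checks that $q_0 > N/2$. As $\norm[N/2]{K_j} \to 0$, a Brezis--Kato/Moser iteration, whose number of steps and constants depend only on $N$ and an upper bound for $\norm[N/2]{K_j}$, yields a uniform estimate
\[
\norm[\infty]{u_j} \le C\left(\norm[2^\ast]{u_j} + \norm[q_0]{g_j}\right)
\]
valid for all large $j$ and independent of $\tau_j \in [0,1]$.

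It then remains to show that the right-hand side tends to $0$. Clearly $\norm[2^\ast]{u_j} \to 0$. Using first the crude bound $\norm[q_0]{g_j} \le C$, the displayed estimate produces a uniform bound $\norm[\infty]{u_j} \le M$. With this domination in hand, $|g_j| \le C(M^{r-1} + 1)$ is controlled by a constant, $\xi_j \to 0$ a.e., and $h(\cdot,0) = 0$ with the Carath\'{e}odory property forces $g_j \to 0$ a.e.; by dominated convergence on the bounded domain $\Omega$ this gives $\norm[q_0]{g_j} \to 0$. Feeding this back into the uniform estimate yields $\norm[\infty]{u_j} \to 0$, completing the contradiction. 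The hard part will be the uniform $L^\infty$ a priori bound: one must run the Brezis--Kato iteration so that all constants remain independent of $j$ and $\tau_j$, exploiting the smallness of $\norm[N/2]{K_j}$ to absorb the critical term while keeping the subcritical forcing in $L^{q_0}$ with $q_0 > N/2$.
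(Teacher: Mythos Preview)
Your overall contradiction scheme is the same as the paper's, and the reduction to $\norm[\infty]{u_j}\to 0$ is exactly right. The issue is the step where you assert that $q_0=2^\ast/(r-1)>N/2$ ``because $r<2^\ast$''. A direct computation gives
\[
q_0>\frac{N}{2}\iff \frac{2N/(N-2)}{r-1}>\frac{N}{2}\iff r<1+\frac{4}{N-2}=2^\ast-1,
\]
which is strictly stronger than the hypothesis $r<2^\ast$. For $N\ge 6$ one has $2^\ast-1\le 2<r$, so the inequality $q_0>N/2$ never holds, and for $N=4,5$ it fails whenever $r\in[2^\ast-1,2^\ast)$. Hence the one–shot Brezis--Kato estimate $\norm[\infty]{u_j}\le C(\norm[2^\ast]{u_j}+\norm[q_0]{g_j})$ is not available as stated, and the argument as written does not close.

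The fix is to bootstrap: from $u_j$ bounded in $L^{2^\ast}$ and $\norm[N/2]{K_j}\to 0$, a uniform Brezis--Kato iteration first yields $u_j$ bounded in $L^s$ for every $s<\infty$; then $K_j=(1-\tau_j)|u_j|^{2^\ast-2}$ and $g_j$ are both bounded in $L^q$ for some $q>N/2$, and Calder\'on--Zygmund plus Sobolev gives the uniform $L^\infty$ bound, after which your dominated–convergence step goes through. The paper takes a different route to the $L^\infty$ bound: it verifies the uniform absolute continuity $\int_A|u_j|^{2^\ast}\to 0$ as $|A|\to 0$ (arguing by cases on whether the offending indices $j_k$ stay bounded) and then invokes \cite[Proposition~3.7]{MR2530603} directly; once $\seq{u_j}$ is bounded in $C(\closure{\Omega})$, it uses $h_j(x,u_j)\to 0$ in every $L^q$ and Calder\'on--Zygmund to conclude. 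Either approach works, but yours needs the extra bootstrap step to be correct for all admissible $r$.
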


\begin{proof}
Suppose that the conclusion of the lemma is false. Then there exist sequences $\seq{\tau_j} \subset [0,1]$ and $\seq{u_j} \subset H^1_0(\Omega) \setminus \set{0}$ such that $E_{\tau_j}'(u_j) = 0$ and $u_j \to 0$ in $H^1_0(\Omega)$. We will show that $u_j \to 0$ in $C(\closure{\Omega})$ for a renamed subsequence. Then for all sufficiently large $j$, $|u_j| \le \delta/2$ and hence $E'(u_j) = E_{\tau_j}'(u_j) = 0$, contradicting the assumption that $0$ is an isolated critical point of $E$.

We have
\begin{equation} \label{24}
\left\{\begin{aligned}
- \Delta u_j & = h_j(x,u_j) && \text{in } \Omega\\[10pt]
u_j & = 0 && \text{on } \bdry{\Omega},
\end{aligned}\right.
\end{equation}
where
\begin{multline*}
h_j(x,t) = h(x,(1 - \tau_j)\, t + \tau_j\, \vartheta(t))\, (1 - \tau_j + \tau_j\, \vartheta'(t))\\[5pt]
+ (1 - \tau_j)\, |t|^{2^\ast - 2}\, t + \tau_j\, |\vartheta(t)|^{2^\ast - 2}\, \vartheta(t)\, \vartheta'(t)
\end{multline*}
satisfies a growth condition
\begin{equation} \label{23}
|h_j(x,t)| \le C(|t|^{2^\ast - 1} + 1) \quad \forall (x,t) \in \Omega \times \R
\end{equation}
for some $C > 0$ independent of $j$. First we show that $\seq{u_j}$ is bounded in $C(\closure{\Omega})$. By de Figueiredo et al.\! \cite[Proposition 3.7]{MR2530603}, it suffices to show that
\begin{equation} \label{22}
\int_A |u_j|^{2^\ast}\, dx \to 0 \quad \text{as } |A| \to 0
\end{equation}
uniformly in $j$. Suppose not. Then there exist $\eps_0 > 0$ and a sequence $\seq{A_k}$ of subsets of $\Omega$ with $|A_k| \to 0$ such that
\begin{equation} \label{21}
\int_{A_k} |u_{j_k}|^{2^\ast}\, dx \ge \eps_0
\end{equation}
for some $j_k$. If the sequence $\seq{j_k}$ is bounded, then there exists $j_0$ such that, for a renamed subsequence, $j_k \equiv j_0$ and hence
\[
\int_{A_k} |u_{j_k}|^{2^\ast}\, dx = \int_{A_k} |u_{j_0}|^{2^\ast}\, dx \to 0 \quad \text{as } k \to \infty
\]
since $|A_k| \to 0$, contradicting \eqref{21}. On the other hand, if $\seq{j_k}$ is unbounded then, for a renamed subsequence, $j_k \to \infty$ as $k \to \infty$ and hence
\[
\int_{A_k} |u_{j_k}|^{2^\ast}\, dx \le \int_\Omega |u_{j_k}|^{2^\ast}\, dx \to 0 \quad \text{as } k \to \infty
\]
since $u_j \to 0$ in $H^1_0(\Omega)$, again contradicting \eqref{21}. So \eqref{22} holds and hence $\seq{u_j}$ is bounded in $C(\closure{\Omega})$.

Next we note that from $u_j \to 0$ in $H^1_0(\Omega)$ it follows, for a renamed subsequence, $u_j \to 0$ a.e., whence $h_j(x,u_j) \to h(x,0) = 0$ a.e. Moreover, $h_j(x,u_j)$ is bounded, because$\seq{u_j}$ is bounded in $C(\closure{\Omega})$ and $h_j$ satisfies \eqref{23}. Thus, $h_j(x,u_j) \to 0$ in $L^q(\Omega)$ for any $q\in [1,\infty)$. Since $u_j$ solves \eqref{24}, then $u_j \to 0$ in $W^{2,q}(\Omega)$ by the Calder\'{o}n-Zygmund inequality. The continuous embedding $W^{2,q}(\Omega) \hookrightarrow C(\closure{\Omega})$, $q> N/2$, entails $u_j \to 0$ in $C(\closure{\Omega})$, as desired.
\end{proof}

We are now ready to prove Theorem \ref{Theorem 6}.

\begin{proof}[Proof of Theorem \ref{Theorem 6}]
We apply Proposition \ref{Proposition 2} to the family of functionals $E_\tau,\, \tau \in [0,1]$ in $\closure{B_\eps(0)}$, where $\eps > 0$ is as in Lemma \ref{Lemma 2}, so that $\closure{B_\eps(0)}$ contains no nonzero critical point of any $E_\tau$. Clearly, the map $[0,1] \to C^1(\closure{B_\eps(0)},\R),\, \tau \mapsto E_\tau$ is continuous. We will show that each $E_\tau$ satisfies the \PS{c} condition on $\closure{B_\eps(0)}$ for all $c \in \R$ if $\eps$ is further restricted.

First we note that if $\eps > 0$ is sufficiently small, then
\begin{equation} \label{26}
\sup_{u \in \closure{B_\eps(0)}}\, E_\tau(u) < \frac{1}{N}\, S^{N/2} \quad \forall \tau \in [0,1].
\end{equation}
To see this, suppose that there exist sequences $\seq{\tau_j} \subset [0,1]$ and $u_j \to 0$ in $H^1_0(\Omega)$ such that
\begin{equation} \label{25}
E_{\tau_j}(u_j) \ge \frac{1}{N}\, S^{N/2}.
\end{equation}
For a renamed subsequence, $u_j \to 0$ a.e.\! in $\Omega$ and strongly in $L^q(\Omega)$ for all $q\in [1,2^\ast]$. Since $h$ satisfies \eqref{11}, then $E_{\tau_j}(u_j) \to 0$, contradicting \eqref{25}. So \eqref{26} holds.

Now let $\eps > 0$ be as above, let $\tau \in [0,1]$, let $c \in \R$, and let $\seq{u_j} \subset \closure{B_\eps(0)}$ be a \PS{c} sequence of $E_\tau$. By Lemma \ref{Lemma 1}, a renamed subsequence of $\seq{u_j}$ converges weakly to a critical point $u \in \closure{B_\eps(0)}$ of $E_\tau$. Since $\closure{B_\eps(0)}$ contains no nonzero critical point of $E_\tau$, $u = 0$. Since $u_j \in \closure{B_\eps(0)}$ and $E_\tau(u_j) \to c$,
\[
c \le \sup_{u \in \closure{B_\eps(0)}}\, E_\tau(u),
\]
so \eqref{26} implies
\[
c < \frac{1}{N}\, S^{N/2} \le \frac{1}{N}\, \frac{S^{N/2}}{(1 - \tau)^{(N-2)/2}}
\]
if $\tau \in [0,1)$. So $u_j \to 0$ strongly by Lemma \ref{Lemma 1} and hence $E_\tau$ satisfies the \PS{c} condition on $\closure{B_\eps(0)}$.
\end{proof}

Finally we prove the proposition below, from which Theorem \ref{Theorem 4} is immediate. Indeed, if \eqref{13} holds, then
\[
H(x,t) = \frac{\mu}{\sigma}\, |t|^\sigma + \o(|t|^\sigma), \hquad 2H(x,t) - th(x,t) = \left(\frac{2}{\sigma} - 1\right) \mu\, |t|^\sigma + \o(|t|^\sigma)\text{ as } t \to 0
\]
uniformly a.e. in $\Omega$, and \eqref{30} and \eqref{29} follow from this since $\mu > 0$ and $\sigma \in (1,2)$.

\begin{proposition} \label{Proposition 3}
Assume that \eqref{11} holds,
\begin{equation} \label{30}
\lim_{t \to 0}\, \frac{H(x,t)}{t^2} = + \infty \quad \text{uniformly a.e. in $\Omega$},
\end{equation}
and
\begin{equation} \label{29}
2H(x,t) - th(x,t) > \left(1 - \frac{2}{p}\right) |t|^p \quad \forall x \in \Omega,\, 0 < |t| \le \delta
\end{equation}
for some $\delta > 0$. If $0$ is an isolated critical point of $E$, then
\[
C_q(E,0) = 0 \quad \forall q \ge 0.
\]
\end{proposition}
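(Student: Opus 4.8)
The plan is to reveal, in a small ball $B_\rho(0)$, a purely radial picture: the sublevel set $E^0 \cap B_\rho(0)$ is star-shaped about $0$ while $E$ crosses the value $0$ transversally along every ray, and then to read off the vanishing of all critical groups from the contractibility of the unit sphere of the infinite-dimensional space $H^1_0(\Omega)$, exactly as in the proof of Theorem \ref{Theorem 3}. Note first that $H(x,0)=0$ gives $E(0)=0$, so that $C_q(E,0)=H_q(E^0\cap U,\,E^0\cap U\setminus\set{0})$.

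The algebraic engine is the quantity
\[
Q(u) := E'(u)\,u - 2\,E(u) = \int_\Omega \Big(2H(x,u) - h(x,u)\,u - \big(1-\tfrac{2}{p}\big)|u|^p\Big)\,dx .
\]
By \eqref{29} its integrand is strictly positive wherever $0 < |u(x)| \le \delta$ and vanishes where $u(x)=0$, so $Q(u) > 0$ for every $u \ne 0$ with $|u| \le \delta$ a.e. Setting $\phi_u(s)=E(su)$, one computes $\frac{d}{ds}\big(\phi_u(s)/s^2\big)=Q(su)/s^3$, whence $\phi_u(s)/s^2$ is strictly increasing as long as $|su|\le\delta$. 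Two consequences follow: if $E(su)\le 0$ then $E(s'u)\le 0$ for all $s'\in[0,s]$ (star-shapedness), and if $E(u)=0$ with $u\ne0$ then $E'(u)\,u=Q(u)>0$ (transversality). Meanwhile \eqref{30} gives $H(x,su)/s^2\to+\infty$ as $s\to0^+$ wherever $u(x)\ne0$, so $\phi_u(s)/s^2\to-\infty$ and $E(su)<0$ for all small $s>0$ and every $u\ne0$. Hence along each unit vector $u$ there is a well-defined radius $R(u)>0$ with $E(su)\le0$ for $s\le R(u)$ and $E(su)>0$ beyond it, and transversality together with the implicit function theorem makes $u\mapsto R(u)$ continuous.

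Granting this radial structure, the topology is immediate. The set $E^0\cap B_\rho(0)$ is star-shaped, hence contractible, so the long exact sequence of the pair collapses to $C_q(E,0)\isom\widetilde H_{q-1}\big(E^0\cap B_\rho(0)\setminus\set{0}\big)$. The punctured negative region is, via $su\mapsto(u,\,s/R(u))$, homeomorphic to $S\times(0,1]$, where $S$ is the unit sphere of $H^1_0(\Omega)$; it therefore deformation retracts onto $S$. Since $H^1_0(\Omega)$ is infinite dimensional, $S$ is contractible, so $\widetilde H_{q-1}=0$ and $C_q(E,0)=0$ for all $q\ge0$.

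The hard part is that the decisive inequality $Q(u)>0$ relies on the pointwise bound \eqref{29}, which holds only for $|t|\le\delta$, whereas functions in $H^1_0(\Omega)$ are not in $L^\infty(\Omega)$ when $N\ge2$, so $|u|\le\delta$ a.e. cannot be secured on a whole ball. This is exactly the difficulty Theorem \ref{Theorem 6} removes: I would first invoke it to replace $E$ by the truncated functional $\widetilde E$, whose nonlinear terms depend on $u$ only through $\vartheta(u)\in[-\delta,\delta]$, and carry out the argument above for $\widetilde E$. The delicate point is that truncation destroys the exact homogeneity of the rays — differentiating $\widetilde E(su)$ produces the factor $\vartheta'(u)\,u$ in place of $\vartheta(u)$ — so the monotonicity computation must be redone for $\widetilde E$ and the positivity of the corresponding $Q$ recovered, using that $0\le t\,\vartheta'(t)\le\vartheta(t)$ for $t\ge0$ to control the resulting correction terms against the positive main part coming from \eqref{29}.
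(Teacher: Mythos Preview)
Your overall architecture matches the paper's: invoke Theorem \ref{Theorem 6} to trade $E$ for a localized functional, then run the radial analysis (negativity near the origin from \eqref{30}, monotonicity of $s\mapsto E(su)/s^2$ from \eqref{29}, star-shapedness and transversality, and finally contractibility of the unit sphere in the infinite-dimensional space). The paper does precisely this, and your topological reduction at the end is the right one.

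The gap is in your last paragraph, where you propose to carry out the radial computation directly on $\widetilde E$. Differentiating $\widetilde E(su)$ produces, as you note, the factor $\vartheta'(su)\,su$ in place of $\vartheta(su)$, so the analogue of $Q$ contains correction terms of the form $h(x,\vartheta(w))\,[\vartheta(w)-w\,\vartheta'(w)]$ and $|\vartheta(w)|^{p-2}\vartheta(w)\,[\vartheta(w)-w\,\vartheta'(w)]$. Your proposed control via $0\le t\,\vartheta'(t)\le \vartheta(t)$ fails on two counts. First, that inequality is not true for a generic smooth nondecreasing $\vartheta$ with $\vartheta(t)=t$ on $[-\delta/2,\delta/2]$ and $\vartheta(t)=\pm\delta$ for $|t|\ge\delta$: the ratio $\vartheta(t)/t$ equals $1$ at both $t=\delta/2$ and $t=\delta$, so $t\,\vartheta'(t)-\vartheta(t)$ must change sign on $(\delta/2,\delta)$ unless $\vartheta(t)\equiv t$ there. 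Second, even granting it, the correction involving $h$ has the wrong sign whenever $h(x,\vartheta(w))<0$, and nothing in \eqref{11}--\eqref{29} forces $h\ge 0$; the strict positivity in \eqref{29} is only of the combination $2H-th-(1-2/p)|t|^p$ and does not absorb an uncontrolled multiple of $|h|$.

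The paper resolves this by inserting a second localization. After passing to $\widetilde E$ (which is now subcritical), it applies Lemma \ref{Lemma 3} to replace $\widetilde E$ by a functional $\bar E$ in which $h$ is extended linearly and $|t|^{p-2}t$ is replaced by $\delta^{p-2}t$ for $|t|>\delta$. For $\bar E$ the exact identity $\dfrac{d}{dt}\bar E(tu)=\dfrac{2}{t}\,\bar E(tu)+\dfrac{1}{t}\int_\Omega P(x,tu)\,dx$ holds with $P(x,t)=Q(x,t)$ for $|t|\le\delta$ and $P(x,t)=Q(x,\pm\delta)$ for $|t|>\delta$, where $Q(x,t)=2H(x,t)-th(x,t)-(1-2/p)|t|^p$. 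Thus $P>0$ on $\Omega\times(\R\setminus\{0\})$ directly from \eqref{29}, with no sign hypothesis on $h$ and no inequality on $\vartheta$. The radial argument then goes through verbatim for $\bar E$. The missing idea in your proposal is this second, carefully designed modification; your attempt to push the computation through on $\widetilde E$ itself does not close.
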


First we prove a localization result for subcritical problems.

\begin{lemma} \label{Lemma 3}
For $i = 0, 1$, let $h_i$ be Carath\'{e}odory functions on $\Omega \times \R$ satisfying
\[
|h_i(x,t)| \le C(|t|^{r-1} + 1) \quad \forall (x,t) \in \Omega \times \R
\]
for some $C > 0$ and $r > 2$ if $N = 2$ and $2 < r < 2^\ast$ if $N \ge 3$, and set
\[
E_i(u) = \int_\Omega \left(\half\, |\nabla u|^2 - H_i(x,u)\right) dx, \quad u \in H^1_0(\Omega),
\]
where $H_i(x,t) = \int_0^t h_i(x,s)\, ds$. If $0$ is an isolated critical point of $E_0$ and
\begin{equation} \label{27}
h_0(x,t) = h_1(x,t) \quad \forall x \in \Omega,\, |t| \le \delta
\end{equation}
for some $\delta > 0$, then $0$ is also an isolated critical point of $E_1$ and
\[
C_q(E_0,0) \isom C_q(E_1,0) \quad \forall q \ge 0.
\]
\end{lemma}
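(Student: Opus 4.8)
The plan is to run the same homotopy argument used for Theorem \ref{Theorem 6}, but now in the subcritical regime where the compactness needed for Proposition \ref{Proposition 2} is automatic. I would interpolate the nonlinearities linearly, setting $h_\tau = (1 - \tau)\, h_0 + \tau\, h_1$ and $H_\tau(x,t) = \int_0^t h_\tau(x,s)\, ds$, and defining
\[
E_\tau(u) = \int_\Omega \left(\half\, |\nabla u|^2 - H_\tau(x,u)\right) dx = (1 - \tau)\, E_0(u) + \tau\, E_1(u), \quad \tau \in [0,1].
\]
Since each $h_\tau$ is affine in $\tau$, so is $H_\tau$, whence the last equality; in particular $\tau \mapsto E_\tau$ is continuous (indeed affine) from $[0,1]$ into $C^1(\closure{B_\eps(0)},\R)$, giving hypothesis (i) of Proposition \ref{Proposition 2} for free. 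By \eqref{27}, $h_\tau(x,t) = h_0(x,t)$ whenever $|t| \le \delta$, for every $\tau$; in particular $h_\tau(x,0) = 0$ a.e., so $0$ is a critical point of each $E_\tau$, and each $h_\tau$ inherits the subcritical growth bound with the same constants. The goal is then to verify hypotheses (ii) and (iii) of Proposition \ref{Proposition 2} on $U = \closure{B_\eps(0)}$ for suitably small $\eps > 0$ and read off $C_q(E_0,0) \isom C_q(E_1,0)$, the isolatedness of $0$ for $E_1$ being exactly the $\tau = 1$ instance of (ii).

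The crux, and the step I expect to be the main obstacle, is hypothesis (ii): that for some $\eps > 0$ the ball $\closure{B_\eps(0)}$ contains no nonzero critical point of any $E_\tau$. I would argue by contradiction as in Lemma \ref{Lemma 2}. If no such $\eps$ exists, there are $\tau_j \in [0,1]$ and critical points $u_j \ne 0$ of $E_{\tau_j}$ with $u_j \to 0$ in $H^1_0(\Omega)$, each solving $- \Delta u_j = h_{\tau_j}(x,u_j)$ with a uniform bound $|h_{\tau_j}(x,u_j)| \le C(|u_j|^{r-1} + 1)$. Since $r \le 2^\ast$ and $u_j \to 0$ in $L^{2^\ast}(\Omega)$ by the continuous Sobolev embedding, one has $\int_A |u_j|^{2^\ast}\, dx \to 0$ uniformly in $j$ as $|A| \to 0$; the de Figueiredo et al.\! bound \cite{MR2530603} then makes $\seq{u_j}$ bounded in $C(\closure{\Omega})$, and the Calder\'{o}n--Zygmund bootstrap of Lemma \ref{Lemma 2} upgrades this, along a renamed subsequence, to $u_j \to 0$ in $C(\closure{\Omega})$. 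Consequently $|u_j| \le \delta$ for large $j$, so $h_{\tau_j}(x,u_j) = h_0(x,u_j)$ by \eqref{27}, and $u_j$ is a nonzero critical point of $E_0$ arbitrarily close to $0$, contradicting the hypothesis that $0$ is an isolated critical point of $E_0$.

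It remains to confirm hypothesis (iii), the \PS{c} condition of each $E_\tau$ on $\closure{B_\eps(0)}$ for all $c \in \R$, which is routine here: any such sequence is bounded because it lies in a ball, and since $r < 2^\ast$ the associated Nemytskii operators are compact, so the standard subcritical argument produces a strongly convergent subsequence. With (i)--(iii) established, Proposition \ref{Proposition 2} yields $C_q(E_0,0) \isom C_q(E_1,0)$ for all $q \ge 0$, while the $\tau = 1$ case of (ii) shows $0$ is an isolated critical point of $E_1$, completing the proof. The only genuinely delicate point is the $C(\closure{\Omega})$ convergence underlying (ii); everything else is a transcription of the subcritical half of the machinery already set up for Theorem \ref{Theorem 6}.
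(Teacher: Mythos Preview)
Your proposal is correct and follows essentially the same route as the paper: the same affine homotopy $E_\tau = (1-\tau)E_0 + \tau E_1$, the same contradiction argument for hypothesis (ii) ending with $|u_j| \le \delta$ and $E_0'(u_j) = 0$, and the same appeal to subcriticality for (iii). The only tactical difference is in the $C(\closure{\Omega})$ step: you route through the de~Figueiredo et al.\ equi-integrability criterion (borrowed from the critical-case Lemma~\ref{Lemma 2}) to first get an $L^\infty$ bound, whereas the paper, exploiting $r < 2^\ast$, runs a direct Calder\'{o}n--Zygmund/Sobolev bootstrap starting from $u_j \to 0$ in $L^{2^\ast}$ and iterating $q \mapsto Nq/(N(r-1)-2q)$ until $q > N(r-1)/2$; your route works but is a bit heavier than necessary in the subcritical regime.
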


\begin{proof}
We apply Proposition \ref{Proposition 2} to the family of functionals
\[
E_\tau(u) = \int_\Omega \left(\half\, |\nabla u|^2 - (1 - \tau)\, H_0(x,u) - \tau\, H_1(x,u)\right) dx, \quad u \in H^1_0(\Omega),\, \tau \in [0,1]
\]
in a small closed ball $\closure{B_\eps(0)}$. Clearly, the map $[0,1] \to C^1(\closure{B_\eps(0)},\R),\, \tau \mapsto E_\tau$ is continuous. Since $r < 2^\ast$ if $N \ge 3$, each $E_\tau$ satisfies the \PS{c} condition on $\closure{B_\eps(0)}$ for all $c \in \R$. We will show that $\closure{B_\eps(0)}$ contains no nonzero critical point of any $E_\tau$ if $\eps > 0$ is sufficiently small.

Suppose that there exist sequences $\seq{\tau_j} \subset [0,1]$ and $\seq{u_j} \subset H^1_0(\Omega) \setminus \set{0}$ such that $E_{\tau_j}'(u_j) = 0$ and $u_j \to 0$ in $H^1_0(\Omega)$. We will show that $u_j \to 0$ in $C(\closure{\Omega})$ for a renamed subsequence. Then for all sufficiently large $j$, $|u_j| \le \delta$ and hence $E_0'(u_j) = E_{\tau_j}'(u_j) = 0$ by \eqref{27}, contradicting the assumption that $0$ is an isolated critical point of $E_0$.

We have
\begin{equation} \label{35}
\left\{\begin{aligned}
- \Delta u_j & = h_j(x,u_j) && \text{in } \Omega\\[10pt]
u_j & = 0 && \text{on } \bdry{\Omega},
\end{aligned}\right.
\end{equation}
where $h_j(x,t) = (1 - \tau_j)\, h_0(x,t) + \tau_j\, h_1(x,t)$ also satisfies the growth condition
\begin{equation} \label{36}
|h_j(x,t)| \le C(|t|^{r-1} + 1) \quad \forall (x,t) \in \Omega \times \R.
\end{equation}
For a renamed subsequence, $u_j \to 0$ a.e.\! and $\tau_j$ converges to some $\tau \in [0,1]$, and hence
\[
h_j(x,u_j) \to (1 - \tau)\, h_0(x,0) + \tau\, h_1(x,0) = 0 \quad \text{a.e.}
\]
So, if $u_j \to 0$ in $L^q(\Omega)$, then $h_j(x,u_j) \to 0$ in $L^{q/(r-1)}(\Omega)$ by \eqref{36}. Since $u_j$ solves \eqref{35}, then $u_j \to 0$ in $W^{2,q/(r-1)}(\Omega)$ by the Calder\'{o}n-Zygmund inequality. Then $u_j \to 0$ in $L^{Nq/(N(r-1)-2q)}(\Omega)$ also by the Sobolev embedding theorem. Starting with $q = 2^\ast$, iterating until $q > N(r - 1)/2$, and using the continuous embedding $W^{2,q/(r-1)}(\Omega) \hookrightarrow C(\closure{\Omega})$ now entails $u_j \to 0$ in $C(\closure{\Omega})$, as desired.
\end{proof}

We are now ready to prove Proposition \ref{Proposition 3}.

\begin{proof}[Proof of Proposition \ref{Proposition 3}]
By Theorem \ref{Theorem 6}, $0$ is also an isolated critical point of $\widetilde{E}$ and
\begin{equation} \label{32}
C_q(E,0) \isom C_q(\widetilde{E},0) \quad \forall q \ge 0,
\end{equation}
where $\widetilde{E}$ is as in that theorem. Let
\[
\bar{h}(x,t) = \begin{cases}
-h(x,- \delta)\, t/\delta, & t < - \delta\\[5pt]
h(x,t), & |t| \le \delta\\[5pt]
h(x,\delta)\, t/\delta, & t > \delta,
\end{cases} \qquad k(t) = \begin{cases}
|t|^{p-2}\, t, & |t| \le \delta\\[5pt]
\delta^{p-2}\, t, & |t| > \delta,
\end{cases}
\]
and set
\[
\bar{E}(u) = \int_\Omega \left(\half\, |\nabla u|^2 - \bar{H}(x,u) - K(u)\right) dx, \quad u \in H^1_0(\Omega),
\]
where $\bar{H}(x,t) = \int_0^t \bar{h}(x,s)\, ds$ and $K(t) = \int_0^t k(s)\, ds$. Then $0$ is also an isolated critical point of $\bar{E}$ and
\begin{equation}
C_q(\widetilde{E},0) \isom C_q(\bar{E},0) \quad \forall q \ge 0
\end{equation}
by Lemma \ref{Lemma 3}. Since $\bar{E}(0) = 0$,
\begin{equation} \label{34}
C_q(\bar{E},0) = H_q(\bar{E}^0 \cap B,\bar{E}^0 \cap B \setminus \set{0}),
\end{equation}
where $\bar{E}^0 = \{u \in H^1_0(\Omega) : \bar{E}(u) \le 0\}$ and $B = \set{u \in H^1_0(\Omega) : \norm{u} \le 1}$. We will show that $\bar{E}^0 \cap B$ is contractible to $0$ and $\bar{E}^0 \cap B \setminus \set{0}$ is a strong deformation retract of $B \setminus \set{0} \homo \bdry{B} =: S$. Since $H^1_0(\Omega)$ is infinite dimensional and hence $S$ is contractible, the conclusion will then follow from \eqref{32}--\eqref{34}.

For $u \in S$ and $0 < t \le 1$,
\[
\bar{E}(tu) = \frac{t^2}{2} - \int_\Omega \left(\bar{H}(x,tu) + K(tu)\right) dx.
\]
Since $K \ge 0$,
\[
\bar{E}(tu) \le t^2 \left(\half - \int_\Omega \frac{\bar{H}(x,tu)}{t^2}\, dx\right).
\]
Since $\bar{H} = H$ on $\Omega \times [- \delta,\delta]$, \eqref{30} implies
\[
\lim_{t \to 0}\, \frac{\bar{H}(x,t)}{t^2} = + \infty \quad \text{uniformly a.e. in $\Omega$},
\]
so
\[
\lim_{t \to 0}\, \int_\Omega \frac{\bar{H}(x,tu)}{t^2}\, dx = + \infty
\]
by Fatou's lemma. So $\bar{E}(tu) < 0$ for all sufficiently small $t > 0$ (depending on $u$). Moreover,
\[
\frac{d}{dt}\, \bar{E}(tu) = t - \int_\Omega \bar{h}(x,tu)\, u\, dx - \int_\Omega k(tu)\, u\, dx = \frac{2}{t}\, \bar{E}(tu) + \frac{1}{t} \int_\Omega P(x,tu)\, dx,
\]
where $P(x,t) = 2 \bar{H}(x,t) - t \bar{h}(x,t) + 2K(t) - tk(t)$. A straightforward calculation shows that
\[
P(x,t) = \begin{cases}
Q(x,- \delta), & t < - \delta\\[5pt]
Q(x,t), & |t| \le \delta\\[5pt]
Q(x,\delta), & t > \delta,
\end{cases}
\]
where
\[
Q(x,t) = 2H(x,t) - th(x,t) - \left(1 - \frac{2}{p}\right) |t|^p > 0 \quad \forall x \in \Omega,\, 0 < |t| \le \delta
\]
by \eqref{29}, so $P > 0$ on $\Omega \times (\R \setminus \set{0})$. So $\dfrac{d}{dt}\, \bar{E}(tu) > \dfrac{2}{t}\, \bar{E}(tu)$ and hence
\begin{equation} \label{33}
\bar{E}(tu) \ge 0 \implies \dfrac{d}{dt}\, \bar{E}(tu) > 0.
\end{equation}
Thus, there is a unique $0 < T(u) \le 1$ such that $E(tu) < 0$ for $0 < t < T(u)$, $E(T(u)\, u) \le 0$, and $E(tu) > 0$ for $T(u) < t \le 1$. We claim that the map $T : S \to (0,1]$ is continuous. By \eqref{33} and the implicit function theorem, $T$ is $C^1$ on $\bgset{u \in S : T(u) < 1}$, so it suffices to show that if $u_j \to u$ and $T(u) = 1$, then $T(u_j) \to 1$. But for any $t < 1$, $\bar{E}(tu_j) \to \bar{E}(tu)$ and $\bar{E}(tu) < 0$ since $t < T(u)$, so for all sufficiently large $j$, $\bar{E}(tu_j) < 0$ and hence $T(u_j) \ge t$. So $T(u_j) \to 1$.

Thus,
\[
\bar{E}^0 \cap B = \bgset{tu : u \in S,\, 0 \le t \le T(u)}
\]
and is radially contractible to $0$, and
\begin{align*}
(B \setminus \set{0}) \times [0,1] & \to B \setminus \set{0},\\[5pt]
(u,t) & \mapsto \begin{cases}
(1 - t)\, u + t\, T(\pi(u))\, \pi(u), & u \in B \setminus \bar{E}^0\\[5pt]
u, & u \in \bar{E}^0 \cap B \setminus \set{0},
\end{cases}
\end{align*}
where $\pi$ is the radial projection onto $S$, is a strong deformation retraction of $B \setminus \set{0}$ onto $\bar{E}^0 \cap B \setminus \set{0}$.
\end{proof}

\section{Proofs of Theorems \ref{Theorem 1} and \ref{Theorem 2}} \label{Proofs}

First we prove Theorem \ref{Theorem 2}.

\begin{proof}[Proof of Theorem \ref{Theorem 2}]
We apply Theorem \ref{Theorem 3} to the functional
\[
E(u) = \int_\Omega \left(\half\, |\nabla u|^2 - \mu F(x,u) - G(x,u) - \frac{1}{p}\, |u|^p\right) dx, \quad u \in X = H^1_0(\Omega).
\]
We may assume that $E$ has only a finite number of critical points, and hence they are all isolated, since otherwise there is nothing to prove. Since $h(x,t) = \mu f(x,t) + g(x,t)$ satisfies \eqref{11} and \eqref{13} by \eqref{3}, \eqref{2}, \eqref{40}, and \eqref{5}, then
\begin{equation} \label{41}
C_q(E,0) = 0 \quad \forall q \ge 0
\end{equation}
by Theorem \ref{Theorem 4}.

We have the orthogonal direct sum decomposition $X = Y \oplus Z,\, u = v + w$, where $Y$ is spanned by the eigenfunctions associated with $\lambda_1, \dots, \lambda_l$ and $Z$ by those associated with $\lambda_{l+1}, \lambda_{l+2}, \dots$. Let $z_0 \in X \setminus Y$ and $0 < \rho < R$, and set
\begin{equation} \label{55}
A = \set{u = v + t z_0 : v \in Y,\, t \ge 0,\, \norm{u} \le R}, \qquad B = \set{w \in Z : \norm{w} \le \rho}.
\end{equation}
First we show that
\begin{equation} \label{39}
\sup_{\bdry{A}}\, E = 0 < \inf_{\bdry{B}}\, E
\end{equation}
if $R$ is sufficiently large and $\rho$ and $\mu$ are sufficiently small. By \eqref{37} and \eqref{10},
\[
E(v) \le \half \int_\Omega \left(|\nabla v|^2 - \lambda_l\, v^2\right) dx \le 0 \quad \forall v \in Y.
\]
Moreover, $E$ is anticoercive on the finite dimensional subspace $Y \oplus \R z_0$ of $X$ since $p > r > 2$. Since $E(0) = 0$, it follows that the equality in \eqref{39} holds for sufficiently large $R$. Denoting by $C$ a generic positive constant,
\[
F(x,t) \le C(|t|^r + 1), \quad G(x,t) \le \half\, \lambda\, t^2 + C |t|^r \quad \forall (x,t) \in \Omega \times \R
\]
by \eqref{2}, \eqref{5}, and \eqref{9}, and hence
\begin{multline*}
E(w) \ge \half \int_\Omega \left(|\nabla w|^2 - \lambda\, w^2\right) dx - C \int_\Omega \left(\mu + |w|^r + |w|^p\right) dx\\[5pt]
\ge \half \left(1 - \frac{\lambda}{\lambda_{l+1}}\right) \norm{w}^2 - C \left(\mu + \norm{w}^r + \norm{w}^p\right) \quad \forall w \in Z
\end{multline*}
since $r < p < 2^\ast$ when $N \ge 3$. Since $\lambda < \lambda_{l+1}$ and $2 < r < p$, it follows that there exist $\rho, \mu^\ast > 0$ such that the inequality in \eqref{39} holds for all $\mu \in (0,\mu^\ast)$.

Since $N = 2$, or $N \ge 3$ and $p < 2^\ast$, $E$ satisfies the \PS{c} condition for all $c \in \R$, so Theorem \ref{Theorem 3} now gives a pair of critical points $u_1, u_2$ of $E$ with
\[
C_l(E,u_1) \ne 0, \qquad C_{l+1}(E,u_2) \ne 0.
\]
They are nontrivial in view of \eqref{41}.
\end{proof}

In preparation for the proof of Theorem \ref{Theorem 1}, next we determine an energy range where the associated functional
\[
E_\lambda(u) = \int_\Omega \left(\half\, |\nabla u|^2 - \mu F(x,u) - \frac{\lambda}{2}\, u^2 - \frac{1}{2^\ast}\, |u|^{2^\ast}\right) dx, \quad u \in X = H^1_0(\Omega)
\]
satisfies the \PS{c} condition.

\begin{lemma} \label{Lemma 4}
Let $\mu \in (0,1)$ and let $S$ be as in \eqref{19}. Then there exists a constant $\kappa > 0$ such that $E_\lambda$ satisfies the {\em \PS{c}} condition for all
\begin{equation} \label{52}
c < \frac{1}{N}\, S^{N/2} - \kappa \mu.
\end{equation}
\end{lemma}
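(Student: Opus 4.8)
The plan is to follow the classical Brezis--Nirenberg compactness analysis, tracking carefully how the concave perturbation $\mu F$ and the indefinite linear term $\frac{\lambda}{2}\, u^2$ enter the threshold. Let $\seq{u_j}$ be a \PS{c} sequence, so that $E_\lambda(u_j) = c + \o(1)$ and $E_\lambda'(u_j) = \o(1)$.

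First I would establish boundedness of $\seq{u_j}$ for every $c \in \R$. Forming the combination $2 E_\lambda(u_j) - E_\lambda'(u_j)\, u_j$, the quadratic terms $\norm{u_j}^2$ and $\lambda \int_\Omega u_j^2$ cancel, leaving $\frac{2}{N} \int_\Omega |u_j|^{2^\ast}\, dx$ on one side and $2c + \mu \int_\Omega (2F(x,u_j) - f(x,u_j)\, u_j)\, dx$ plus error terms on the other. Using the subcritical bound $|2F(x,t) - f(x,t)\, t| \le C(|t|^r + 1)$ from \eqref{2} and Young's inequality $\int_\Omega |u_j|^r\, dx \le \eps \int_\Omega |u_j|^{2^\ast}\, dx + C_\eps$ (valid since $r < 2^\ast$), the $f$-contribution is absorbed and one gets $\int_\Omega |u_j|^{2^\ast}\, dx \le C + \o(\norm{u_j})$. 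Feeding this back into $E_\lambda'(u_j)\, u_j = \o(\norm{u_j})$ and dominating both the subcritical term $\mu \int_\Omega f(x,u_j)\, u_j\, dx$ and the linear term $\lambda \int_\Omega u_j^2\, dx \le \lambda\, |\Omega|^{2/N} (\int_\Omega |u_j|^{2^\ast}\, dx)^{2/2^\ast}$ by this controlled critical integral then yields $\norm{u_j}^2 \le C + \o(\norm{u_j})$, hence boundedness. This is precisely where the non-coercivity of the quadratic form for $\lambda \ge \lambda_1$ is handled: the term $\lambda \int_\Omega u_j^2$ is controlled by the already-bounded critical integral rather than by $\norm{u_j}^2$.

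Given boundedness, a renamed subsequence converges weakly to some $u$, strongly in $L^q(\Omega)$ for $q < 2^\ast$, and a.e.; passing to the limit in $E_\lambda'(u_j) = \o(1)$ shows that $u$ is a critical point of $E_\lambda$. Writing $v_j = u_j - u$ and invoking the Brezis--Lieb lemma together with the subcritical convergences $\mu \int_\Omega F(x,u_j)\, dx \to \mu \int_\Omega F(x,u)\, dx$ and $\int_\Omega u_j^2\, dx \to \int_\Omega u^2\, dx$, I obtain $E_\lambda(u_j) = E_\lambda(u) + \half \norm{v_j}^2 - \frac{1}{2^\ast} \int_\Omega |v_j|^{2^\ast}\, dx + \o(1)$ and, from $E_\lambda'(u_j)\, u_j = \o(1)$, the relation $\norm{v_j}^2 - \int_\Omega |v_j|^{2^\ast}\, dx = \o(1)$. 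Setting $b = \lim \norm{v_j}^2$, the Sobolev inequality \eqref{19} forces either $b = 0$, whence $u_j \to u$ strongly as desired, or $b \ge S^{N/2}$, in which case $E_\lambda(u) = c - \frac{1}{N}\, b \le c - \frac{1}{N}\, S^{N/2}$.

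It remains to produce a lower bound on the critical value $E_\lambda(u)$ that is linear in $\mu$ and uniform in $c$ and $u$. Since $E_\lambda'(u) = 0$, the quadratic pieces again cancel in $E_\lambda(u) = E_\lambda(u) - \half E_\lambda'(u)\, u = \frac{1}{N} \int_\Omega |u|^{2^\ast}\, dx - \mu \int_\Omega (F(x,u) - \half f(x,u)\, u)\, dx$. Using $|F(x,t) - \half f(x,t)\, t| \le C(|t|^r + 1)$ and Young's inequality with a \emph{fixed} $\eps$ chosen so that $\mu\eps \le \frac{1}{2N}$ for all $\mu \in (0,1)$, the perturbation is absorbed into the positive term $\frac{1}{N} \int_\Omega |u|^{2^\ast}\, dx$, leaving $E_\lambda(u) \ge - \kappa \mu$ with $\kappa = C_\eps\, |\Omega|$ independent of $c$, $\mu$, and $u$. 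Combining this with the previous inequality, the case $b \ge S^{N/2}$ would force $c \ge \frac{1}{N}\, S^{N/2} - \kappa \mu$, contradicting \eqref{52}; hence $b = 0$ and $E_\lambda$ satisfies \PS{c}. The main obstacle is exactly this interplay of the indefinite linear term with boundedness together with the need for a $\mu$-linear, $c$-uniform lower bound on critical values, and the admissible $\kappa$ is dictated by the Young constant $C_\eps$ appearing in this final estimate.
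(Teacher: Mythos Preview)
Your argument is correct and follows essentially the same Br\'ezis--Nirenberg scheme as the paper: boundedness via the combination $2E_\lambda(u_j)-E_\lambda'(u_j)u_j$, weak convergence plus Br\'ezis--Lieb to reduce to $\norm{v_j}^2=\pnorm[2^\ast]{v_j}^{2^\ast}+\o(1)$, and then the key $\mu$-linear lower bound. The only cosmetic difference is that you phrase the last step as $E_\lambda(u)\ge -\kappa\mu$ obtained via Young's inequality, whereas the paper writes $c=\frac{1}{N}\norm{\widetilde u_j}^2+\int_\Omega K(x,u)\,dx+\o(1)$ and bounds $K(x,t)=\frac{1}{N}|t|^{2^\ast}+\mu(\frac12 f(x,t)t-F(x,t))\ge -\mu C$ by direct minimization in $t$; since $\int_\Omega K(x,u)\,dx=E_\lambda(u)$ for the limiting critical point $u$, the two formulations coincide.
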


\begin{proof}
Let $c \in \R$ and let $\seq{u_j}$ be a \PS{c} sequence of $E_\lambda$, so that
\begin{equation} \label{42}
E_\lambda(u_j) = \int_\Omega \left(\half\, |\nabla u_j|^2 - \mu F(x,u_j) - \frac{\lambda}{2}\, u_j^2 - \frac{1}{2^\ast}\, |u_j|^{2^\ast}\right) dx = c + \o(1)
\end{equation}
and
\begin{equation} \label{43}
E_\lambda'(u_j)\, v = \int_\Omega \left(\nabla u_j \cdot \nabla v - \mu f(x,u_j)\, v - \lambda u_j\, v - |u_j|^{2^\ast - 2}\, u_j\, v\right) dx = \o(\norm{v})
\end{equation}
for all $v \in H^1_0(\Omega)$. Taking $v = u_j$ in \eqref{43} gives
\begin{equation} \label{44}
\int_\Omega \left(|\nabla u_j|^2 - \mu f(x,u_j)\, u_j - \lambda u_j^2 - |u_j|^{2^\ast}\right) dx = \o(\norm{u_j}).
\end{equation}
Since $r < 2^\ast$, \eqref{42} and \eqref{44} imply that $\seq{u_j}$ is bounded, so a renamed subsequence of $\seq{u_j}$ converges to some function $u$ weakly in $H^1_0(\Omega)$, strongly in $L^s(\Omega)$ for all $s \in [1,2^\ast)$, and a.e.\! in $\Omega$. Setting $\widetilde{u}_j = u_j - u$, we will show that $\widetilde{u}_j \to 0$ in $H^1_0(\Omega)$.

Equation \eqref{44} implies
\begin{equation} \label{48}
\norm{u_j}^2 = \pnorm[2^\ast]{u_j}^{2^\ast} + \int_\Omega \left(\mu f(x,u)\, u + \lambda u^2\right) dx + \o(1),
\end{equation}
where $\pnorm[2^\ast]{\cdot}$ denotes the $L^{2^\ast}(\Omega)$-norm. Taking $v = u$ in \eqref{43} and passing to the limit gives
\begin{equation} \label{46}
\norm{u}^2 = \pnorm[2^\ast]{u}^{2^\ast} + \int_\Omega \left(\mu f(x,u)\, u + \lambda u^2\right) dx.
\end{equation}
Since
\begin{equation} \label{54}
\norm{\widetilde{u}_j}^2 = \norm{u_j}^2 - \norm{u}^2 + \o(1)
\end{equation}
and
\[
\pnorm[2^\ast]{\widetilde{u}_j}^{2^\ast} = \pnorm[2^\ast]{u_j}^{2^\ast} - \pnorm[2^\ast]{u}^{2^\ast} + \o(1)
\]
by the Br{\'e}zis-Lieb lemma \cite[Theorem 1]{MR699419}, \eqref{48}, \eqref{46}, and \eqref{19} imply
\[
\norm{\widetilde{u}_j}^2 = \pnorm[2^\ast]{\widetilde{u}_j}^{2^\ast} + \o(1) \le \frac{\norm{\widetilde{u}_j}^{2^\ast}}{S^{2^\ast/2}} + \o(1),
\]
so
\begin{equation} \label{50}
\norm{\widetilde{u}_j}^2 \left(S^{N/(N-2)} - \norm{\widetilde{u}_j}^{4/(N-2)}\right) \le \o(1).
\end{equation}
On the other hand, \eqref{42} implies
\[
c = \half \norm{u_j}^2 - \frac{1}{2^\ast} \pnorm[2^\ast]{u_j}^{2^\ast} - \int_\Omega \left(\mu F(x,u) + \frac{\lambda}{2}\, u^2\right) dx + \o(1),
\]
and a straightforward calculation combining this with \eqref{48}--\eqref{54} gives
\begin{equation} \label{53}
c = \frac{1}{N} \norm{\widetilde{u}_j}^2 + \int_\Omega K(x,u)\, dx + \o(1),
\end{equation}
where
\[
K(x,t) = \frac{1}{N}\, |t|^{2^\ast} + \mu \left(\half\, f(x,t)\, t - F(x,t)\right).
\]
Denoting by $C$ a generic positive constant,
\[
K(x,t) \ge \frac{1}{N}\, |t|^{2^\ast} - \mu\, C(|t|^r + 1) \quad \forall (x,t) \in \Omega \times \R
\]
by \eqref{2}, and since $r < 2^\ast$, minimizing the right-hand side over $t$ and using $\mu \in (0,1)$ gives $K(x,t) \ge - \mu\, C$. So \eqref{53} implies
\[
\norm{\widetilde{u}_j}^2 \le N(c + \kappa \mu) + \o(1)
\]
for some constant $\kappa > 0$, and \eqref{50} together with this implies that $\widetilde{u}_j \to 0$ when \eqref{52} holds.
\end{proof}

We are now ready to prove Theorem \ref{Theorem 1}.

\begin{proof}[Proof of Theorem \ref{Theorem 1}]
Since $\lambda \ge \lambda_1$, we have $\lambda_l \le \lambda < \lambda_{l+1}$ for some $l \ge 1$. We proceed as in the proof of Theorem \ref{Theorem 2}, but will choose $z_0 \in X \setminus Y$ so that
\begin{equation} \label{56}
\sup_A\, E_\lambda < \frac{1}{N}\, S^{N/2} - \kappa \mu,
\end{equation}
where $A$ is as in \eqref{55} and $\kappa$ is as in Lemma \ref{Lemma 4}. The desired conclusion will then follow from Theorems \ref{Theorem 3} and \ref{Theorem 4} as before.

When $\Omega = \R^N$, the infimum in \eqref{19} is attained by the family of functions
\[
U_\eps(x) = \frac{[N(N - 2)\, \eps]^{(N-2)/4}}{\left(\eps + |x|^2\right)^{(N-2)/2}}, \quad \eps > 0
\]
(see Br{\'e}zis and Nirenberg \cite{MR709644}). We may assume without loss of generality that the ball $B_1(0) \subset \Omega$. Let $\eta \in C^\infty_0(B_1(0))$ be a cutoff function such that $\eta = 1$ on $B_{1/2}(0)$, set
\[
u_\eps(x) = \eta(x)\, U_\eps(x), \quad x \in \Omega,
\]
and let
\[
z_0 = \begin{cases}
u_\eps, & \lambda > \lambda_l\\[5pt]
u_\eps - P_l\, u_\eps, & \lambda = \lambda_l,
\end{cases}
\]
where $P_l$ denotes the projection onto the eigenspace of $\lambda_l$. By \eqref{37},
\[
\sup_{u \in A}\, E_\lambda(u) \le \sup_{u \in A}\, \int_\Omega \left(\half\, |\nabla u|^2 - \frac{\lambda}{2}\, u^2 - \frac{1}{2^\ast}\, |u|^{2^\ast}\right) dx,
\]
and by Capozzi et al.\! \cite[Lemma 2.5]{MR831041}, the supremum on the right-hand side is less than $(1/N)\, S^{N/2}$ if $\eps > 0$ is sufficiently small. So \eqref{56} holds for sufficiently small $\mu$.
\end{proof}

\section*{Acknowledgement}
Work performed within the research project of MIUR Prin 2017 `Nonlinear Differential Problems via Variational, Topological and Set-valued Methods' (Grant No. 2017AYM8XW) and partially supported by INDAM/GNAMPA.

\def\cdprime{$''$}

\end{document}